\newtheorem{theorem}{Theorem}
\newtheorem{definition}{Definition}
\newtheorem{remark}{Remark}
\newtheorem{proposition}{Proposition}
\newtheorem{lemma}{Lemma}
\newcommand{\g}{\mathfrak{g}}
\newcommand{\h}{\mathfrak{d}}
\newcommand{\D}{\mathcal{D}}
\newtheorem{example}{Example}
\newcommand{\R}{\mathbb{R}}
\newcommand{\grad}{\normalfont{\text{grad}}}
\newcommand{\SO}{\normalfont{\text{SO}}}
\newcommand{\SE}{\normalfont{\text{SE}}}
\newcommand{\ad}{\normalfont{\text{ad}}}
\def\BibTeX{{\rm B\kern-.05em{\sc i\kern-.025em b}\kern-.08em
    T\kern-.1667em\lower.7ex\hbox{E}\kern-.125emX}}
\begin{document}

\title{Virtual Holonomic and Nonholonomic Constraints on Lie groups}

\author{Alexandre Anahory Simoes$^{1}$, Anthony Bloch, Leonardo J. Colombo, Efstratios Stratoglou.
\thanks{A. Anahory Simoes is with the School of Science and Technology, IE University, Spain. (e-mail: alexandre.anahory@ie.edu).}
\thanks{A. Bloch is with Department of Mathematics, University of Michigan, Ann Arbor, MI 48109, USA. (e-mail: abloch@umich.edu)}
\thanks{L. Colombo is with Centre for Automation and Robotics (CSIC-UPM), Ctra. M300 Campo Real, Km 0,200, Arganda del Rey - 28500 Madrid, Spain. (e-mail: leonardo.colombo@csic.es).}
\thanks{E. Stratoglou is with American College of Thessaloniki (ACT), 17 Sevenidi St.
55535, Pylaia, Thessaloniki, Greece. (e-mail: stratogl@act.edu).}\thanks{The authors acknowledge financial support from Grant PID2022-137909NB-C21 funded by MCIN/AEI/ 10.13039/501100011033. A.B. was partially supported by NSF grant  DMS-2103026, and AFOSR grants FA9550-22-1-0215 and FA 9550-23-1-0400. The research leading to these results was supported in part by iRoboCity2030-CM, Robótica Inteligente para Ciudades Sostenibles (TEC-2024/TEC-62), funded by the Programas de Actividades I+D en Tecnologías en la Comunidad de Madrid.}}

\maketitle


\begin{abstract}
This paper develops a geometric framework for virtual constraints on Lie groups, with emphasis on mechanical systems modeled as affine connection systems.
Virtual holonomic and virtual nonholonomic constraints, including linear and affine nonholonomic constraints, are formulated directly at the level of the Lie algebra and characterized as feedback--invariant manifolds.
For each class of constraint, we establish existence and uniqueness conditions for enforcing feedback laws and show that the resulting closed--loop trajectories evolve as the dynamics of mechanical systems endowed with induced constrained connections, generalizing classical holonomic and nonholonomic reductions.
Beyond stabilization, the framework enables the systematic generation of low--dimensional motion primitives on Lie groups by enforcing invariant, possibly affine, manifolds and shaping nontrivial dynamical regimes.
The approach is illustrated through representative examples, including quadrotor UAVs and a rigid body with an internal rotor, where classical control laws are recovered as special cases and affine constraint--induced motion primitives are obtained.
\end{abstract}



\section{Introduction}

Mechanical systems whose configuration evolves on Lie groups arise naturally in robotics and aerospace applications, including spacecraft, underwater vehicles, and quadrotor UAVs \cite{bullo,lee2010geometric}.  
In such systems the configuration space is globally described by a matrix Lie group (typically $SO(3)$ or $SE(3)$), enabling intrinsic, coordinate-free control laws that avoid singularities and remain valid over the entire configuration space.  
When symmetries are present, geometric reduction on the associated Lie algebra further simplifies the dynamics and yields compact and structure-preserving control formulations.

Virtual constraints—relations enforced through feedback rather than physical mechanisms—provide a powerful tool for shaping the motion of underactuated mechanical systems.  
\emph{Virtual holonomic constraints} (VHCs) impose configuration-level relations and have been extensively used to encode gaits and low-dimensional motion primitives in legged robots \cite{chevallereau2009asymptotically,la2013stable,razavi2016symmetric,freidovich2008periodic,westervelt2018feedback}.  
\emph{Virtual nonholonomic constraints} (VNCs), instead, impose velocity-level relations; originally developed for walking robots \cite{griffin2015nonholonomic,griffin2017nonholonomic,horn2018hybrid,horn2020nonholonomic,hamed2019nonholonomic}, they extend the scope of virtual constraints to momentum-based and drift-dominated behaviors in underactuated systems.

While VHCs and VNCs are well understood in the tangent-bundle formulation of controlled Lagrangian systems \cite{isidori,bloch2003nonholonomic}, a unified geometric description of both classes of constraints \emph{on Lie groups} is still incomplete.  
Recent works provide first steps \cite{simoes2023virtual,stratoglou2023virtual,VC,moran2021energy}, but a comprehensive treatment of invariant manifolds (linear and affine), induced constrained connections, and the relation between holonomic/nonholonomic reduction and virtual constraint realization has been missing. The work \cite{stratoglou2026symplectic} elaborates virtual nonlinear nonholonomic constraints from a symplectic perspective where the trajectories that satisfy the constraints are characterized in terms of the almost-tangent and a symplectic structure on the tangent bundle.

This paper develops a geometric framework for both virtual holonomic and virtual nonholonomic constraints on Lie groups.  
We characterize linear and affine virtual constraints as feedback–invariant manifolds of the Lie algebra associated with an affine connection mechanical system.  
For both VHCs and VNCs we provide sufficient conditions ensuring the existence and uniqueness of enforcing feedback laws, and we show that the resulting closed-loop trajectories correspond to geodesics of an induced constrained connection.  
Because the constrained closed-loop dynamics evolve on low-dimensional invariant manifolds or distributions, the resulting behaviours constitute geometric motion primitives on Lie groups, providing a systematic tool for encoding and designing families of maneuvers. In both the holonomic and nonholonomic settings, we clarify under which conditions virtual constraints reproduce the reduced constrained dynamics of classical mechanical systems. In the holonomic case, virtual holonomic constraints recover the reduced Euler–Lagrange equations on the constraint submanifold, while in the nonholonomic case we further identify precise conditions under which virtual nonholonomic constraints recover the reduced dynamics of classical nonholonomic systems \cite{bloch2003nonholonomic}.

Beyond the theoretical contributions, this framework is directly relevant for robotic and mechanical systems evolving on $SE(3)$. Virtual holonomic constraints enable enforcing configuration-based behaviors such as attitude alignment or motion along Lie subgroups, while virtual nonholonomic constraints allow encoding velocity-dependent navigation primitives, coordinated planar motion, or thrust-attitude coupling in a way that respects the geometric structure and underactuation.

The paper is organized as follows. Section \ref{Sec: background} reviews the differential geometric preliminaries on manifolds and Lie groups used throughout the paper. Section \ref{sec3} develops the Riemannian and Levi-Civita connection framework for mechanical systems on Lie groups, including their reduction to the Lie algebra.
Section \ref{sec4} introduces virtual holonomic constraints on Lie groups, while Section \ref{sec5} develops the corresponding theory of virtual nonholonomic constraints, including both linear and affine cases.  
In both sections, we establish existence and uniqueness results for enforcing feedback laws, derive the induced constrained connections governing the closed--loop dynamics on the associated invariant manifolds, and illustrate the theory through representative examples.

\section{Background on Differential Geometry}\label{Sec: background}

\subsection{Background on vector bundles and vector fields}
A vector bundle of rank $k>0$ on the manifold $Q$ is a smooth assignment to each point $q\in Q$ of a vector space with dimension $k$. Relevant particular cases for us are tangent bundles, where to each point $q$ we assign the tangent space at $q$; and distributions, in which the vector space is a subspace of the tangent space. 

If $P$ is a vector bundle on $Q$, then $P$ might be written as a collection of vector spaces $P=\cup_{q\in Q} P_{q}$, where $P_{q}$ is the $k$-dimensional space assigned to the point $q$. In addition, there is a map called the bundle projection and denoted by $\pi:P\to Q$, defined by $\pi(p)=q$ if $p\in P_{q}$. Smooth sections of a vector bundle $P$ on $Q$ are smooth maps $X:Q\to P$ having the property that $X(q)\in P_{q}$, i.e., the vector $X(q)$ must belong to the vector space assigned to $q$ for all $q\in Q$. We denote the collection of smooth section of a vector bundle $P$ by $\Omega(P)$.

Vector fields are smooth sections of the tangent bundle. Vector fields are smooth maps of the form $X: Q \to TQ$ such that $\pi \circ X = \text{id}_Q$, the identity function on $Q$. For a vector field $X\in \mathfrak{X}(Q)$ we define its vertical lift to $TQ$ to be $\displaystyle{X_{v_{q}}^{V}=\left. \frac{d}{dt}\right|_{t=0} (v_{q} + t X(q))}$, where $v_q\in T_qQ$. This defines a vector field $X^V$ on $TQ$, called the vertical lift of $X$. 

The flow of a vector field $X\in \mathfrak{X}(Q)$ is the map $\phi^{X}_{t}:Q \to Q$ such that the curve $q(t)=\phi_{t}(q_0)$ is the solution of the integral curve of the differential equation $\dot{q}=X(q(t))$ with initial condition $q(0)=q_{0}$ for each fixed $q_0\in Q$.

\subsection{Background on Riemannian Manifolds}
Let $(Q, \left< \cdot, \cdot\right>)$ be an $n$-dimensional \textit{Riemannian
manifold}, where $Q$ is an $n$-dimensional smooth manifold and $\left< \cdot, \cdot \right>$ is a positive-definite symmetric covariant 2-tensor field called the \textit{Riemannian metric}. That is, to each point $q\in Q$ we assign a positive-definite inner product $\left<\cdot, \cdot\right>_q:T_qQ\times T_qQ\to\mathbb{R}$, where $T_qQ$ is the \textit{tangent space} of $Q$ at $q$ and $\left<\cdot, \cdot\right>_q$ varies smoothly with respect to $q$. The length of a tangent vector is determined by its norm, defined by
$\|v_q\|=\left<v_q,v_q\right>^{1/2}$ with $v_q\in T_qQ$.  The cotangent bundle of $Q$ is denoted by $T^*Q$ and for $q\in Q$ the cotangent space $T^*_qQ$, which is the dual of the tangent space $T_qQ$. For any $q \in Q$, the Riemannian metric induces an invertible map $\flat: T_q Q \to T_q^\ast Q$, called the \textit{flat map}, defined by $\flat(X)(Y) = \left<X, Y\right>$ for all $X, Y \in T_q Q$. The inverse map $\sharp: T_q^\ast Q \to T_q Q$, called the \textit{sharp map}, is similarly defined implicitly by the relation $\left<\sharp(\alpha), Y\right> = \alpha(Y)$ for all $\alpha \in T_q^\ast Q$. Let $C^{\infty}(Q)$ and $\mathfrak{X}(Q)$ denote the spaces of smooth functions and smooth vector fields on $Q$, respectively. The sharp map provides a map from $C^{\infty}(Q) \to \mathfrak{X}(Q)$ via $\grad f(q) = \sharp(df_q)$ for all $q \in Q$, where $\grad f$ is called the \textit{gradient vector field} of $f \in C^{\infty}(Q)$. This identification allows expressing force fields derived from a potential 
$V \in C^{\infty}(Q)$ as vector fields of the form $-\grad V = -\,\sharp(dV)$, 
which naturally appear in the Euler–Lagrange and Euler–Poincaré equations as 
configuration–dependent drifts.

An \textit{affine connection} on $Q$ is a map $\nabla: \mathfrak{X}(Q) \times \mathfrak{X}(Q) \to \mathfrak{X}(Q)$ which is $C^{\infty}(Q)$-linear in the first argument, $\R$-linear in the second argument, and satisfies the product rule $\nabla_X (fY) = X(f) Y + f \nabla_X Y$ for all $f \in C^{\infty}(Q), \ X, Y \in \mathfrak{X}(Q)$. The connection plays a role similar to that of the directional derivative in classical real analysis. The operator
$\nabla_{X}$ which assigns to every smooth vector field $Y$ the vector field $\nabla_{X}Y$ is called the \textit{covariant derivative} (of $Y$) \textit{with respect to $X$}.

We will use the Levi-Civita connection defined by the following formula known as the \textit{Koszul formula}:
\begin{align}\label{eq:Koszul}
2 \, \langle \nabla_X Y , Z \rangle 
&= X \big( \langle Y , Z \rangle \big)
 + Y \big( \langle X , Z \rangle \big)
 - Z \big( \langle X , Y \rangle \big) \nonumber \\
& - \langle X , [Y,Z] \rangle
 - \langle Y , [X,Z] \rangle
 + \langle Z , [X,Y] \rangle .
\end{align}

Let $\gamma: I \to Q$ be a smooth curve parameterized by $t \in I \subset \R$, and denote the set of smooth vector fields along $\gamma$ by $\Gamma(\gamma)$. Then for any affine connection $\nabla$ on $Q$, there exists a unique operator $\nabla_{\dot{\gamma}}: \Gamma(\gamma) \to \Gamma(\gamma)$ (called the \textit{covariant derivative along $\gamma$}) which agrees with the covariant derivative $\nabla_{\dot{\gamma}}\tilde{W}$ for any extension $\tilde{W}$ of $W$ to $Q$, with $W,\tilde{W}\in\Gamma(\gamma)$. 
 The covariant derivative allows  one to define a particularly important family of smooth curves on $Q$ called \textit{geodesics}, which are defined as the smooth curves $\gamma$ satisfying $\nabla_{\dot{\gamma}} \dot{\gamma} = 0$. 

It is well-known that the Riemannian metric induces a unique torsion-free and metric compatible connection called the \textit{Riemannian connection}, or the \textit{Levi-Civita connection}. In mechanical systems endowed with a Riemannian metric, the Levi–Civita 
connection determines the notion of kinetic acceleration and the evolution 
of free motion, while potential forces arise from gradients of smooth 
functions $V:Q\to\mathbb{R}$ under the same metric pairing. In the remainder of this paper, we will assume that $\nabla$ is the Riemannian connection. For additional information on connections, we refer the reader to \cite{Boothby,Milnor}. When the covariant derivative corresponds to the Levi-Civita connection, geodesics can also be characterized as the critical points of the length functional $\displaystyle{L(\gamma) = \int_0^1 \|\dot{\gamma}\|dt}$ among all unit-speed \textit{piecewise regular} curves $\gamma: [a, b] \to Q$ (that is, where there exists a subdivision of $[a, b]$ such that $\gamma$ is smooth and satisfies $\dot{\gamma} \ne 0$ on each subdivision). 

The metric $\langle \cdot , \cdot \rangle$ defines a Riemannian distance function
\[
d(x,y) = \inf_{\gamma} \int_0^1 \|\dot{\gamma}(t)\|\, dt,
\]
where the infimum is taken over all regular curves $\gamma : [0,1] \to Q$ joining $x$ and $y$. 
This expression is invariant under reparameterization of $\gamma$. 
If $\gamma_{x,y}$ denotes the minimizing geodesic between $x$ and $y$, then 
\[
d(x,y) = \int_0^1 \|\dot{\gamma}_{x,y}(t)\|\, dt .
\] If we assume that $Q$ is \textit{complete} (that is, $(Q, d)$ is a complete metric space), then by the Hopf-Rinow theorem, any two points $x$ and $y$ in $Q$ can be connected by a (not necessarily unique) minimal-length geodesic $\gamma_{x,y}$. In this case, the Riemannian distance between $x$ and $y$ can be defined by $\displaystyle{d(x,y)=\int_{0}^{1}\Big{\|}\frac{d \gamma_{x,y}}{d s}(s)\Big{\|}\, ds}$. Moreover, if $y$ is contained in a geodesically convex neighborhood of $x$ (that is, the point $y$ must
belong to a convex open ball around $x$), we can write the Riemannian distance by means of the Riemannian exponential as $d(x,y)=\|\mbox{exp}_x^{-1}y\|$ (see \cite{bullo2019geometric} for instance). 

\section{Euler--Poincar\'e equations}\label{sec3}

Let $G$ be a Lie group with Lie algebra $\g := T_eG$. The left-translation map
$L_g : G \to G$, $L_g h = gh$, is a diffeomorphism for all $g\in G$. 
Given an inner product $\langle\cdot,\cdot\rangle_{\g}$ on the Lie algebra $\g$, 
the left-translation map $L_g : G \to G$, $L_g h = gh$, induces a left-trivialization 
of tangent vectors via its tangent map $(L_g)_\ast : T_h G \to T_{gh} G$.  
Using this, we define a left-invariant Riemannian metric on $G$ by
\[
\langle X_g, Y_g\rangle 
:= \big\langle (L_{g^{-1}})_\ast X_g,\; (L_{g^{-1}})_\ast Y_g\big\rangle_{\g}
\] for $g\in G,\; X_g,Y_g\in T_gG$. This establishes a one-to-one correspondence between inner products on $\g$ and 
left-invariant Riemannian metrics on $G$, and ensures that each left translation 
$L_g$ is an isometry \cite{goodman2022reduction}.

A vector field $X$ on $G$ is \emph{left-invariant} if $(L_g)_\ast X = X$ for all $g\in G$.
We denote by $\mathfrak{X}_L(G)$ the space of left-invariant vector fields. The map $(\cdot)_{L}:\g\to \mathfrak{X}_L(G)$,
\[
\xi \mapsto \xi_{L} (g) = (L_g)_\ast \xi
\]
is a vector space isomorphism and, with the usual Lie bracket of vector fields, a Lie algebra
isomorphism.

Let $\nabla$ be the Levi-Civita connection on $(G,\langle\cdot,\cdot\rangle)$.
The isomorphism $\varphi$ induces a bilinear map $\nabla^\g:\g\times\g\to\g$,
\[
\nabla^\g_\xi\eta := \big(\nabla_{\xi_{L}}\eta_{L}\big)(e),
\]
called the \emph{Riemannian $\g$-connection}. It is $\mathbb{R}$-bilinear and satisfies, 
for all $\xi,\eta,\sigma\in\g$,
\begin{align}
    \nabla^\g_\xi\eta - \nabla^\g_\eta\xi &= [\xi,\eta], \label{eq:g-conn-torsion}\\
    \langle\nabla^\g_\sigma\xi,\eta\rangle_{\g}
    +\langle\xi,\nabla^\g_\sigma\eta\rangle_{\g} &= 0. \label{eq:g-conn-metric}
\end{align}
We have the explicit formula (cf.\ Theorem~5.40 in
\cite{bullo2019geometric})
\begin{equation}\label{eq:g-connection-formula}
    \nabla^\g_\xi\eta
    =\frac12\Big([\xi,\eta]
    -\sharp\left[\ad_\xi \flat(\eta)\right]
    -\sharp \left[\ad_\eta \flat(\xi)\right]\Big).
\end{equation}

Let $g:[a,b]\to G$ be a smooth curve, and let $X$ be a smooth vector field along $g$.
Define $\xi(t) := g(t)^{-1}\dot g(t)\in\g$, 
$\eta(t) := g(t)^{-1}X(t)\in\g$. Then (see  \cite{goodman2022reduction})
\begin{equation}\label{eq:cov-to-g-conn}
    \nabla_{\dot g}X(t)
    = (L_{g(t)})_\ast\big(\dot\eta(t)+\nabla^\g_{\xi(t)}\eta(t)\big).
\end{equation}

\begin{theorem}[\cite{goodman2022reduction}]
Let $g:[a,b]\to G$ be a solution of the Euler--Lagrange equations for the 
left-invariant mechanical Lagrangian $L(g,\dot g)=\tfrac12\|\dot g\|^2 - V(g)$, and define the left-trivialized velocity $\xi := g^{-1}\dot g \in \g$. Then $\xi:[a,b]\to\g$ satisfies the Euler--Poincar\'e equation 
\begin{equation}\label{eq:EP-geo}
    \dot\xi + \nabla^\g_\xi\xi 
    + (L_{g^{-1}})_{\ast}\big(\grad V(g)\big) = 0,
\end{equation}
or, using \eqref{eq:g-connection-formula},
\begin{equation}\label{eq:EP-geo-adstar}
    \dot\xi 
    - \sharp\big(\ad^\ast_\xi\flat(\xi)\big) 
    + (L_{g^{-1}})_{\ast}\big(\grad V(g)\big)
    = 0.
\end{equation}
\end{theorem}

The second and third-terms in equation \eqref{eq:EP-geo} are the 
drift terms of the Euler--Poincar\'e equations. When the Riemannian connection and the potential function are clear from the context, we will use the following notation for the drift terms
\[
B(\xi,g):=\nabla^{\g}_{\xi}\xi + (L_{g^{-1}})_\ast(\grad V(g)),
\]

\begin{remark}
Given a left-invariant Lagrangian $L:T G\to\mathbb{R}$ satisfying 
$L(hg,h\dot g)=L(g,\dot g)$, the reduced Lagrangian $l:\g\to\mathbb{R}$ is defined by
$l(\xi)=L(e,\xi)=L(g,\dot g)$ for $\xi=g^{-1}\dot g$.
The Euler--Poincar\'e equations for $\ell$ are then
 \[
\frac{d}{dt}\Big(\frac{\partial l}{\partial\xi}\Big)
= \ad^\ast_\xi\frac{\partial l}{\partial\xi}
    + \flat\left[(L_{g^{-1}})_\ast\!\big(\grad V(g)\big)\right] .
\]

For the mechanical Lagrangian $L(g,\dot g)=\tfrac12\|\dot g\|^2 - V(g)$,
we have $l(\xi)=\tfrac12\|\xi\|^2 - V(g)$, 
$\partial l/\partial\xi=\flat[\xi]$, and therefore
\[
\flat(\dot\xi) 
= \ad^\ast_\xi\flat(\xi) 
    + \flat\left[(L_{g^{-1}})_\ast\!\big(\grad V(g)\big)\right],
\]
which is equivalent to \eqref{eq:EP-geo-adstar} after applying the sharp map.
\end{remark}

Now, let us assume that $(G,\langle\cdot,\cdot\rangle)$ is a Lie group with a left-invariant Riemannian metric,
and let $K\subset G$ be a Lie subgroup with Lie algebras $\mathfrak{g}$ and $\mathfrak{k}$, respectively. 
The metric restricts to a left-invariant metric on $K$, with Levi-Civita connection
$\nabla^K$ and associated Riemannian $\mathfrak{k}$-connection 
$\nabla^{\mathfrak{k}}:\mathfrak{k}\times\mathfrak{k}\to\mathfrak{k}$ defined
as above. Denote by $\mathfrak{k}^\perp\subset\mathfrak{g}$ 
the orthogonal complement of $\mathfrak{k}$.

A curve $g:I\to K$ is a geodesic on $K$ if and only if
\[
\nabla^G_{\dot g}\dot g \;\in\; (T K)^\perp,
\qquad
\dot g\in T_gK,
\]
where $\nabla^G$ is the Levi-Civita connection on $G$ and $(T K)^\perp$ is the orthogonal
distribution to $T K$ in $T G$.

\begin{theorem}\label{thm:holonomic-EP-left}
Let $g:I\to K\subset G$ be a smooth curve and define the left-trivialized velocity
$\xi := g^{-1}\dot g\in\mathfrak{k}$. Consider the mechanical Lagrangian
$L(g,\dot g)=\tfrac12\|\dot g\|^2 - V(g)$,
and let $V_K$ be its restriction to $K$. Then the following statements are equivalent:
\begin{enumerate}
    \item $g$ is a solution of the constrained Euler--Lagrange equations on $K$, i.e.
    \begin{equation}\label{holonomic:EL}
        \nabla^K_{\dot g}\dot g + \grad_K V_K(g) = 0.
    \end{equation}

    \item The left-trivialized velocity $\xi$ satisfies the forced Euler--Poincar\'e equation
    on the Lie algebra $\mathfrak{k}$ of $K$:
    \begin{equation}\label{holonomic:potential:dynamics}
        \dot\xi + \nabla^{\mathfrak{k}}_{\xi}\xi 
        + (L_{g^{-1}})_\ast\big(\grad_K V_K(g)\big)
        = 0 .
    \end{equation}

    \item Viewed in the full Lie algebra $\mathfrak{g}$, $\xi$ satisfies the constrained
    Euler--Poincar\'e condition:
    \begin{equation}\label{holonomic:constrained:g}
        \dot\xi + \nabla^{\mathfrak{g}}_{\xi}\xi 
        + (L_{g^{-1}})_\ast\big(\grad V(g)\big)
        \;\in\; \mathfrak{k}^\perp .
    \end{equation}
\end{enumerate}
\end{theorem}

\section{Virtual holonomic constraints on Lie groups}\label{sec4}

Consider a mechanical controlled system determined by a Lagrangian of the form kinetic minnus potential energy $L(g,\dot g)=\tfrac12\|\dot g\|^2 - V(g)$, where the control force $F:TQ\times U \rightarrow T^{*}Q$ is of the form
\begin{equation}\label{force:control}
    F(q,\dot{q},u) = \sum_{a=1}^{m} u^{a}f^{a}(q)
\end{equation}
where $f^{a}$ is a one-form on $Q$ with $m<n$, $U\subset\mathbb{R}^{m}$ the set of controls and $u^a\in\mathbb{R}$ with $1\leq a\leq m$ the control inputs. Then, the Riemannian form of the equations of motion reads
\begin{equation}\label{mechanical:control:system}
    \nabla_{\dot{q}(t)} \dot{q}(t) + \grad \ V (q(t))= u^{a}(t)Y_{a}(q(t)),
\end{equation}
with $Y_{a}=\sharp(f^{a})$ the corresponding force vector fields. The distribution $\mathcal{F}\subseteq TQ$ generated by the vector fields  $Y_{a}$ is called the \textit{input distribution} associated with the mechanical control system \eqref{mechanical:control:system}.

A \textit{feedback control law} for equation \eqref{mechanical:control:system} is a set of $m$ functions $\hat{u}=(\hat{u}^{1}, \ldots, \hat{u}^{m}):\mathcal{U}\subseteq TQ \rightarrow \mathbb{R}^{m}$. Substituting the control law as the control input in \eqref{mechanical:control:system}, we obtain an equation without controls that is called \textit{closed-loop system}.


Similarly to the uncontrolled equation, we may associate to the controlled system \eqref{mechanical:control:system} the vector field $\Gamma_{u}\in \mathfrak{X}(TQ)$ of the form
$$\Gamma_{u}(v_{q}) = \Gamma^{\nabla}(v_{q}) - (\grad \ V)^{V}_{v_{q}} - u^{a} (Y_{a})^{V}_{v_{q}},$$ where $\Gamma^{\nabla} \in \mathfrak{X}(TQ)$ is the \textit{geodesic vector field} of the connection $\nabla$.

Let $\hat{u}=(\hat{u}^{1}, \ldots, \hat{u}^{m}):\mathcal{U}\subseteq TQ \rightarrow \mathbb{R}^{m}$ be a feedback control law defined on an open subset $\mathcal{U}\subseteq TQ$. By choosing this control input in system \eqref{mechanical:control:system}, we obtain a closed-loop system whose solution $q(t)$ with initial point $q\in Q$ and initial velocity $v_{q}\in T_{q}Q$ is connected to the flow of the vector field $\Gamma_{\hat{u}}$ through the relationship $$\phi_{t}^{\hat{u}}(v_{q})=(q(t), \dot{q}(t)),$$ where $\phi_{t}^{\hat{u}}:\mathcal{U} \to \mathcal{U}$ is the flow of $\Gamma_{\hat{u}}$ and it is well-defined at least for small values of $t$.

In the following, if $\hat{u}$ denotes a control law for system \eqref{mechanical:control:system} then $\phi_{t}^{\hat{u}}$ denotes the flow of the corresponding closed-loop system.

\begin{definition}\label{control:invariant:definition}
    An embedded submanifold $\mathcal{N}\subseteq Q$ is said to be controlled invariant for the control system \eqref{mechanical:control:system} if there exist $m$ control functions $\hat{u}=(\hat{u}^{1}, \ldots, \hat{u}^{m}):T\mathcal{N}\rightarrow \mathbb{R}^{m}$ such that the flow of the closed-loop system $\phi_{t}^{\hat{u}}$ is defined on an open set $\mathcal{B}$ of $T\mathcal{N}$ containing the zero vector and satisfies $\phi_{t}^{\hat{u}}(\mathcal{B})\subseteq T\mathcal{N}$ for all $t\geqslant 0$.
\end{definition}

The concept of virtual holonomic constraints is based on that of controlled invariant submanifold.

\begin{definition}
A \textit{virtual holonomic constraint} associated with the mechanical control system \eqref{mechanical:control:system} is a controlled invariant submanifold $\mathcal{N}\subseteq Q$ for that control system. 
\end{definition}

Provided that $\mathcal{F}$ and $\hbox{ker}(T\Phi)$ are transversal, i.e., their intersection is zero and they span the entire tangent bundle, there is a unique control law for the system \eqref{mechanical:control:system} making $\mathcal{N}$ a virtual holonomic constraint, see \cite{consolini2018coordinate}, \cite{consolini2015induced}.

\begin{remark}\label{remarkzd}
    The transversality assumption is equivalent to the assumption of (vector) relative degree $\{1,\ldots,1\}$ appearing in the literature of zero dynamics (see \cite{isidori1985nonlinear}) concerning control systems evolving in Euclidean spaces. It is simple to show that if $Y_{a}\in \mathcal{F}$ are the vectors spanning the input distribution, then the relative degree of $\Phi$ is $\{1,\ldots,1\}$ if $\langle d\Phi(v_{q}), ((Y)_{a})_{v_{q}}^{V} \rangle \neq 0$ for all $a$. This is equivalent to our transversality assumption. 
\end{remark}

\begin{remark}
    If $\mathcal{F}$ and $\hbox{ker}(T\Phi)$ span the entire tangent bundle but their intersection is not just the zero vector at each point, then a control law still exists but it may not be unique anymore (see \cite{simoes2023virtual} for examples).
\end{remark}

\subsection{Virtual holonomic constraints on Lie Groups}\label{vclg}

Given a control force $F:\mathfrak{g}\times U \rightarrow \mathfrak{g}^{*}$ of the form
\begin{equation}\label{control:force:general}
    F(\xi,u) = \sum_{a=1}^{m} u^{a}F^{a}(\xi)
\end{equation}
where $F^{a}\in\mathfrak{g}^{*}$ with $m<n$, $U\subset\mathbb{R}^{m}$ the set of controls and $u^a\in\mathbb{R}$ with $1\leq a\leq m$ the control inputs, consider a controlled mechanical system on the Lie group $G$ of the form
\begin{equation}\label{control:system}
    \dot{g} = T_{e}L_{g}(\xi), \quad \dot{\xi} - \sharp \left[ \text{ad}^{*}_{\xi(t)}\flat(\xi(t))\right] + \grad_{b} V(g) = u^{a}f_{a},
\end{equation}
where $f_{a}=\sharp(F^a|_\g)\in \g$ are $m<n$ vectors spanning the control input subspace
$\mathfrak{f} = \text{span} \{f_{1}, \dots, f_{m}\}$ and $\grad_{b} V(g) = T_g L_{g^{-1}}\big(\grad V(g)\big)$ is the gradient of the potential in the body frame.

\begin{definition}
  The subspace $\mathfrak{f}$ of the Lie algebra $\g$ given above is called the  \textit{control input subspace} associated with the mechanical control system \eqref{control:system}. 
\end{definition}


\begin{definition}A Lie subalgebra $\mathfrak{k}$ of $\g$ is a subspace of $\g$ that is closed under the Lie bracket, i.e., $[\xi, \eta]\in \mathfrak{k}$ for all $\xi, \eta\in\mathfrak{k}$.\end{definition}

\begin{definition}
   A virtual holonomic constraint associated with the mechanical system of type \eqref{control:system} is a controlled invariant Lie subalgebra $\mathfrak{k}$ of $\g$, that is, there exists a control law  making the Lie subalgebra $\mathfrak{k}$ invariant under the flow of the closed-loop system, i.e. $\xi(0)\in\mathfrak{k}$ and $\xi(t)\in\mathfrak{k}, \; \forall t\geq 0$.
    
\end{definition}

Note that if $\mathfrak{k}$ is a Lie subalgebra, then by Lie's third theorem (see Remark 5.29 in \cite{bullo}), there exists a unique connected Lie subgroup $K\subseteq G$ with Lie algebra $\mathfrak{k}$.

In addition, if the control system \eqref{control:system} satisfies $\xi(t)\in\mathfrak{k}$, then the curve $g$ is  by definition tangent to the left-invariant integrable distribution defined by $T_e L_g(\mathfrak{k})=T_e L_g(T_{e}K)$ which coincides with $T_g K$ for each $g\in K$. In particular, if the curve $g(t)$ starts at a point $g_0\in K$, it will be constrained to the subgroup $K$ for all time $t$.

\begin{theorem}\label{maintheorem}
    Suppose $\mathfrak{k}$ and $\mathfrak{f}$ are subspaces satisfying $\g = \mathfrak{f}\oplus \mathfrak{k} $. Then there exists a unique control law $u^{*}$ making $\mathfrak{k}$ a virtual contraint for the controlled mechanical system \eqref{control:system}.
\end{theorem}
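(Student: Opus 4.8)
The plan is to convert the invariance requirement into a pointwise linear-algebraic condition on the control and then solve that condition. Since $\h$ is a \emph{linear} subspace of $\g$, its tangent space at every point is $\h$ itself, so a solution curve $\xi(t)$ of the closed-loop system with $\xi(0)\in\h$ stays in $\h$ for all $t\geq 0$ if and only if the closed-loop vector field is tangent to $\h$ along $\h$, i.e. $\dot{\xi}\in\h$ whenever $\xi\in\h$. Rewriting the dynamics \eqref{control:system} with the help of Theorem \ref{thm: EP_geo}, which gives $\sharp[\ad^{*}_{\xi}\flat(\xi)]=\nabla^{\g}_{\xi}\xi$, the closed loop reads $\dot{\xi}=-\nabla^{\g}_{\xi}\xi+u^{a}f_{a}$, so the invariance condition becomes
\[
-\nabla^{\g}_{\xi}\xi+u^{a}f_{a}\in\h \qquad \text{for all } \xi\in\h.
\]

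First I would use the standing hypothesis $\g=\mathfrak{f}\oplus\h$ to introduce the associated (in general \emph{non-orthogonal}) projections $\pi_{\mathfrak{f}}\colon\g\to\mathfrak{f}$ and $\pi_{\h}\colon\g\to\h$, so that every $\zeta\in\g$ splits uniquely as $\zeta=\pi_{\mathfrak{f}}(\zeta)+\pi_{\h}(\zeta)$. Applying $\pi_{\mathfrak{f}}$ to the invariance condition and noting that $u^{a}f_{a}\in\mathfrak{f}$ (hence $\pi_{\mathfrak{f}}(u^{a}f_{a})=u^{a}f_{a}$) while the $\h$-valued part is annihilated, the tangency requirement collapses to the single equation
\[
u^{a}f_{a}=\pi_{\mathfrak{f}}\!\left(\nabla^{\g}_{\xi}\xi\right)=\pi_{\mathfrak{f}}\!\left(\sharp[\ad^{*}_{\xi}\flat(\xi)]\right).
\]
Because $\{f_{1},\dots,f_{m}\}$ spans $\mathfrak{f}$ and the direct-sum hypothesis forces $\dim\mathfrak{f}=m$, the family $\{f_{a}\}$ is a \emph{basis} of $\mathfrak{f}$; hence the right-hand side, which lies in $\mathfrak{f}$, has unique coordinates in this basis, and these coordinates define the feedback $u^{*}(\xi)$ unambiguously as a (smooth, in fact quadratic in $\xi$) function on $\h$. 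This establishes both existence and uniqueness, and with this $u^{*}$ the closed-loop field is tangent to $\h$ by construction, so $\h$ is invariant.

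The main obstacle I anticipate is conceptual rather than computational: the careful justification that invariance of the linear subspace $\h$ is \emph{equivalent} to the pointwise tangency condition, which is what lets us reduce to killing only the $\mathfrak{f}$-component of the drift while leaving the $\h$-component unconstrained; and, paired with this, the observation that $\g=\mathfrak{f}\oplus\h$ forces $\{f_{a}\}$ to be linearly independent, which is precisely the fact that turns the solvable equation $u^{a}f_{a}=\pi_{\mathfrak{f}}(\sharp[\ad^{*}_{\xi}\flat(\xi)])$ into a \emph{uniquely} solvable one. Once these two points are in place, the remainder is the routine linear algebra of the splitting and reading off coordinates in the basis $\{f_{a}\}$.
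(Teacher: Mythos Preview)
Your proof is correct and follows essentially the same approach as the paper: both arguments use the direct-sum splitting $\g=\mathfrak{f}\oplus\h$ to isolate the $\mathfrak{f}$-component of the drift $\sharp[\ad^{*}_{\xi}\flat(\xi)]$ and set the control equal to it. The only cosmetic difference is that the paper phrases the tangency condition via annihilator covectors $\mu^{a}$ of $\h$ and the invertibility of the matrix $(\mu^{a}(f_{b}))$, whereas you use the projection $\pi_{\mathfrak{f}}$ and the linear independence of $\{f_{a}\}$ directly; these are dual formulations of the same linear-algebraic fact.
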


\begin{proof}
    Let $\dim \mathfrak{k} = k$ and $\dim \mathfrak{f}=m=n-k$. Consider the covectors $\mu^{1}\dots, \mu^{m} \in \g^{*}$ spanning the annihiliator subspace of $\mathfrak{k}$. If $\xi(t)$ is a curve on $\g$, it satisfies  $\xi(t)\in \mathfrak{k}$ for all time if and only if $\mu^{a}(\xi(t))= 0$ for all $a=1,\dots, m$. Differentiating this equation and assuming that $\xi(t)$ is a solution of the closed loop system \eqref{control:system} for an appropriate choice of control law $u$, we have that
    $$\mu^{a}\left( \sharp \left[ \text{ad}^{*}_{\xi(t)}\flat(\xi(t))\right] - \grad_{b} V(g(t)) \right) + u^{b}\mu^{a}(f_{b}) = 0.$$
    Since $\sharp \left[ \text{ad}^{*}_{\xi(t)}\flat(\xi(t))\right]- \grad_{b} V(g(t)) \in \g$ and $\g = \mathfrak{f}\oplus \mathfrak{k} $ there is a unique way to decompose this vector as the sum
    $$\sharp \left[ \text{ad}^{*}_{\xi(t)}\flat(\xi(t))\right]- \grad_{b} V(g(t)) = \eta(t) + \tau^{b}(t)f_{b},$$
    with $\eta \in \mathfrak{k}$. In addition, note that the coefficients $\tau^{b}$ may be regarded as a function on $\g$. In fact, its definition is associated with the projection to $\mathfrak{f}$ together with the choice of $\{f_{b}\}$ as a basis for $\mathfrak{f}$. Therefore, $\mu^{a}(\xi(t))=0$ if and only if
    $$(\tau^{b}-u^{b}) \mu^{a}(f_{b})=0.$$
    Since $\mu^{a}(f_{b})$ is an invertible matrix, we conclude that $\tau^{b}=u^{b}$, proving existence and uniqueness of a control law making $\mathfrak{k}$ a virtual constraint.
\end{proof}

\begin{remark}
In the context of Remark \ref{remarkzd}, Theorem \ref{maintheorem} is a geometric generalization of Propositon 6.1.2. in \cite{isidori1985nonlinear} applied to simple mechanical control systems on a Lie group endowed with a Riemannian structure.\end{remark}

From now on suppose that the subspace $\mathfrak{k}$ describing the virtual constraints 
and the input subspace $\mathfrak{f}$ are transversal. Therefore, the projections 
$\text{pr}_{\mathfrak{k}}:\g\rightarrow \mathfrak{k}$ and $\text{pr}_{\mathfrak{f}}:\g\rightarrow \mathfrak{f}$ 
associated with the direct sum are well-defined. Using the Riemannian $\g$-connection we define the bilinear map $\nabla^{\mathfrak{c}}$ 
\begin{equation}\label{virtual:nh:connection}
    \nabla^{\mathfrak{c}}_{\xi} \eta = \nabla^{\g}_{\xi} \eta + (\nabla_{\xi}^{\g}\text{pr}_{\mathfrak{f}})(\eta),
\end{equation} 
where $\xi,\eta\in\g$. We refer to that bilinear map as the  \textit{induced constrained connection} associated to the subspace $\mathfrak{k}$ and the input subspace $\mathfrak{f}$. The induced constrained connection is a bilinear map on $\g$ with the special property that restricts to $\mathfrak{k}$, i.e., $\nabla^{\mathfrak{c}}_\xi \eta \in \mathfrak{k}$ for $\xi,\eta\in \mathfrak{k}$, as the next lemma shows.

\begin{lemma}\label{constrained:property}
    If $\xi,\eta \in \mathfrak{k}$ then
    $\nabla^{\mathfrak{c}}_{\xi} \eta = \text{pr}_{\mathfrak{k}}(\nabla^{\g}_\xi \eta) \in \mathfrak{k}.$
\end{lemma}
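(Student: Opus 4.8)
The plan is to mimic directly the proof of the earlier Proposition characterizing $\nabla^{\h}_{\xi}\eta = \mathfrak{P}(\nabla^{\g}_{\xi}\eta)$ on $\h$, since the induced constrained connection $\nabla^{\mathfrak{c}}$ is built from $\nabla^{\g}$ and the projection $\mathfrak{q}$ in exactly the same way that $\nabla^{\h}$ was built from $\nabla^{\g}$ and $\mathfrak{Q}$. First I would unwind the definition \eqref{virtual:nh:connection}, substituting the Leibniz expression for the covariant derivative of the projection-valued map, namely $(\nabla^{\g}_{\xi}\mathfrak{q})(\eta) = \nabla^{\g}_{\xi}(\mathfrak{q}(\eta)) - \mathfrak{q}(\nabla^{\g}_{\xi}\eta)$, so that
$$\nabla^{\mathfrak{c}}_{\xi}\eta = \nabla^{\g}_{\xi}\eta + \nabla^{\g}_{\xi}(\mathfrak{q}(\eta)) - \mathfrak{q}(\nabla^{\g}_{\xi}\eta).$$

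The key structural fact I would exploit is that, since $\g = \mathfrak{f}\oplus\h$ and $\mathfrak{q}$ is precisely the projection onto $\mathfrak{f}$ with kernel $\h$, any $\eta\in\h$ satisfies $\mathfrak{q}(\eta)=0$. Hence the middle term $\nabla^{\g}_{\xi}(\mathfrak{q}(\eta))$ vanishes identically, leaving $\nabla^{\mathfrak{c}}_{\xi}\eta = \nabla^{\g}_{\xi}\eta - \mathfrak{q}(\nabla^{\g}_{\xi}\eta)$. Then I would invoke the complementary relation $\mathfrak{p} + \mathfrak{q} = \mathrm{id}_{\g}$, which follows from the same direct-sum decomposition, to write $\nabla^{\g}_{\xi}\eta = \mathfrak{p}(\nabla^{\g}_{\xi}\eta) + \mathfrak{q}(\nabla^{\g}_{\xi}\eta)$; the two $\mathfrak{q}$-terms then cancel, yielding $\nabla^{\mathfrak{c}}_{\xi}\eta = \mathfrak{p}(\nabla^{\g}_{\xi}\eta)$, as claimed.

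There is no substantive obstacle here; the argument is a short linear-algebra cancellation once the definitions are laid bare. The only point that warrants care is the reading of $(\nabla^{\g}_{\xi}\mathfrak{q})(\eta)$: although $\mathfrak{q}$ is a fixed linear endomorphism of the vector space $\g$, it is being differentiated via the Leibniz rule exactly as $\mathfrak{Q}$ was in the earlier Proposition, and one must keep the two occurrences of $\nabla^{\g}$ in that expression consistent. Once that convention is fixed, the vanishing $\mathfrak{q}(\eta)=0$ together with $\mathfrak{p}+\mathfrak{q}=\mathrm{id}_{\g}$ closes the proof immediately.
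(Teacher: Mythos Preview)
Your proposal is correct and follows essentially the same argument as the paper's own proof: expand $(\nabla^{\g}_{\xi}\mathfrak{q})(\eta)$ via the Leibniz rule, kill the middle term using $\mathfrak{q}(\eta)=0$ for $\eta\in\h$, and identify the remainder as $\mathfrak{p}(\nabla^{\g}_{\xi}\eta)$. The only difference is that you make the final step $\nabla^{\g}_{\xi}\eta - \mathfrak{q}(\nabla^{\g}_{\xi}\eta) = \mathfrak{p}(\nabla^{\g}_{\xi}\eta)$ explicit via $\mathfrak{p}+\mathfrak{q}=\mathrm{id}_{\g}$, whereas the paper leaves it tacit.
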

\begin{proof}
If $\xi,\eta \in \mathfrak{k}$ we have that
    \begin{equation*}
        \begin{split}
            \nabla^{\mathfrak{c}}_{\xi} \eta = & \nabla^{\g}_{\xi} \eta + (\nabla^{\g}_{\xi}\text{pr}_{\mathfrak{f}})(\eta) \\
             = & \nabla^{\g}_{\xi} \eta + \nabla^{\g}_{\xi}(\text{pr}_{\mathfrak{f}}(\eta))- \text{pr}_{\mathfrak{f}}(\nabla^{\g}_{\xi} \eta),
        \end{split}
    \end{equation*}
    where we have used the definition of covariant derivative of a map in the last equality. Noting that $\text{pr}_{\mathfrak{f}}(\eta)=0$ since $\eta\in\mathfrak{k}$, we prove the equality.
\end{proof}

The control law from Theorem \ref{maintheorem} induces a closed-loop system that might be written in a coordinate free manner using the induced connection as the next result shows.

\begin{theorem}\label{orthogonal:input:thm}
    Let $\xi(t)\in\mathfrak{k}$ be a solution of the closed-loop controlled mechanical system \eqref{control:system} with the unique control law $u^*$. Then, $\xi(t)$ satisfies the constrained dynamics
    $$\dot{\xi} + \nabla^{\mathfrak{c}}_{\xi} \xi + \text{pr}_{\mathfrak{k}}(\grad_{b} V(g)) = 0.$$
    In addition, if the control input subspace $\mathfrak{f}$ is orthogonal to the controlled invariant subspace $\mathfrak{k}$ with respect to the inner product on $\g$, then the constrained dynamics coincides with the holonomic constrained dynamics \eqref{holonomic:potential:dynamics} on points of the subgroup $K$.
\end{theorem}

\begin{proof}
 Firstly, rewrite the controlled system \eqref{control:system} in the form
    $$\dot{\xi} + \nabla^{\mathfrak{g}}_{\xi} \xi + \grad_{b} V(g)= u^{a}f_{a}.$$
    Inspecting the proof of Theorem \ref{maintheorem}, one recognizes that the control law $u^a$ is chosen so that the term $u^af_a$ is nothing but $\text{pr}_{\mathfrak{f}}(\nabla^{\mathfrak{g}}_{\xi} \xi + \grad_{b} V(g)$ reflecting the decomposition of $\g$ as the direct sum of $\mathfrak{k}$ and $\mathfrak{f}$. Thus, the closed-loop system is equivalent to
    $$\dot{\xi} + \nabla^{\mathfrak{g}}_{\xi} \xi = \text{pr}_{\mathfrak{f}}(\nabla^{\mathfrak{g}}_{\xi} \xi + \grad_{b} V(g)),$$
    or, equivalently,
    $$\dot{\xi} + \text{pr}_{\mathfrak{k}}(\nabla^{\mathfrak{g}}_{\xi} \xi + \grad_{b} V(g)) = 0,$$
    which by virtue of the previous Lemma gives
    $$\dot{\xi} + \nabla^{\mathfrak{c}}_{\xi} \xi  + \grad_{b} V(g)= 0.$$

    Secondly, if $\mathfrak{f}=\mathfrak{k}^{\bot}$, where the orthogonal complement is taken with respect to the inner product $\langle\cdot,\cdot\rangle$, then the orthgonal projections $\text{pr}_{\mathfrak{k}}$ and $\text{pr}_{\mathfrak{k}}$ coincide, as well as the projections $\text{pr}_{\mathfrak{k}^{\bot}}$ and $\text{pr}_{\mathfrak{f}}$. Thus, the induced constrained connection $\nabla^\mathfrak{c}$ is given by
    $$\nabla^\mathfrak{c}_\xi \xi = \text{pr}_{\mathfrak{k}} (\nabla^{\mathfrak{g}}_{\xi} \xi) = \nabla^{\mathfrak{k}}_{\xi} \xi.$$
    
    Therefore, the closed-loop dynamics satisfies
    $$\dot{\xi} + \text{pr}_{\mathfrak{k}} (\nabla^{\mathfrak{g}}_{\xi} \xi  + \grad_{b} V(g)) = 0$$
    which is equivalent to
    $\dot{\xi} + \nabla^{\mathfrak{k}}_{\xi} \xi  + \text{pr}_{\mathfrak{k}} (\grad_{b} V(g))= 0$
    for $\xi(t)\in \mathfrak{k}$. Using that $\text{pr}_{\mathfrak{k}} \circ T_{g}L_{g^{-1}} = T_{g}L_{g^{-1}} \circ \text{pr}_{TK}$, we deduce that
   $\text{pr}_{\mathfrak{k}} (\grad_{b} V(g)) = T_{g}L_{g^{-1}} \circ \text{pr}_{TK}(\grad V (g))$
    which is equivalent to
    $$\text{pr}_{\mathfrak{k}} (\grad_{b} V(g)) = T_{g}L_{g^{-1}} \grad_{K}V_{K}(g).$$
    on points of $g\in K$, which gives precisely equations \eqref{holonomic:potential:dynamics}.
\end{proof}

Extending the previous results, the following theorem formulates the virtual constraint in the 
orthogonal setting $\mathfrak{g} = \mathfrak{k} \oplus \mathfrak{f}$, 
making explicit the role of the projection $\text{pr}_{\mathfrak{f}}$ and the induced connection 
$\nabla^{\mathfrak{c}}$ on the constraint subspace.

\begin{proposition}\label{orthogonal:input:thm2}
Consider the decomposition $\mathfrak{g} = \mathfrak{k} \oplus \mathfrak{f}$, 
where $\mathfrak{k}$ and $\mathfrak{f}$ are complementary subspaces. 
Assume that the control directions $\{f_a\}_{a=1}^m$ form an orthonormal basis of $\mathfrak{f}$ and let $\{F^a\}_{a=1}^m\subset \mathfrak{g}^*$ be the dual basis of $\{f_a\}$, i.e., 
$F^a(f_b)=\delta^a_b$ and $F^a|_{\mathfrak{k}}=0$.

Then, the unique control input $u^\star$ that renders $\mathfrak{k}$ invariant under the closed–loop dynamics for the controlled Euler–Poincaré system \eqref{control:system} is
\[
u^\star_a = F^a\big(\text{pr}_{\mathfrak{f}}(\nabla^{\mathfrak{g}}_\xi \xi + \grad_{b} V(g)) \big)= F^a\big(\text{pr}_{\mathfrak{f}}(B(\xi,g)) \big), 
\]
where $\nabla^{\mathfrak{g}}$ is the Riemannian connection on $\mathfrak{g}$ induced by the right–invariant Levi–Civita connection on $G$.
\end{proposition}

\begin{proof}
In the proof of Theorem \ref{orthogonal:input:thm}, we have already seem that $$u^af_a=\text{pr}_{\mathfrak{f}}(\nabla^{\mathfrak{g}}_{\xi} \xi + \grad_{b} V(g)).$$ Applying the dual basis to both sides, we arrive at the desired conclusion.
\end{proof}

Proposition~\ref{orthogonal:input:thm2} provides an explicit feedback law $u^\star$ that acts by projecting the Euler--Poincaré dynamics 
onto the constraint subspace $\mathfrak{k}$ along the input subspace $\mathfrak{f}$, 
so that the motion remains confined to $\mathfrak{k}$ for all time. 

The resulting closed--loop dynamics corresponds to a non-Riemannian geodesic flow on the Lie subgroup $K=\exp(\mathfrak{k})\subset G$ 
with respect to the connection induced by $\nabla^{\mathfrak{c}}$ on which we add the influence of a potential function. Hence, the control $u^\star$ forces the dynamics to evolve in $K$ as if it were 
mechanically constrained to move there.

When the invariant subspace corresponds to a Lie subalgebra, the enforced virtual holonomic constraint restricts the motion to a Lie subgroup, yielding structured motion primitives such as rotations about fixed axes or motions along symmetry subgroups.

The following example illustrates the application of Theorem~\ref{orthogonal:input:thm2} 
to a quadrotor UAV modeled on the Lie group $\SE(3)$.

\subsection{Virtual Holonomic Constraint for a Quadrotor on $\SE(3)$}\label{UAV holonomic example}

We consider a quadrotor evolving on $\SE(3)=\SO(3)\ltimes\mathbb{R}^3$, with configurations denoted by $g=(R,r)$, where $R\in\SO(3)$ is the attitude and $r\in\mathbb{R}^3$ the position.  
The left-trivialized velocity is $\xi := g^{-1}\dot g = (\omega,v)\in\mathfrak{se}(3)$, with $\omega\in\mathbb{R}^3$ the body angular velocity, defined by $\dot R = R\widehat{\omega}$, where $\widehat{\omega}$ is the hat map giving the matrix in $\mathfrak{so}(3)$ uniquely determined by $\omega$, and $v\in\mathbb{R}^3$ the body linear velocity, defined by $\dot r = R v$.

The left-invariant metric is
\[
\langle (\omega,v),(\omega',v')\rangle 
  = \omega^\top J \omega' + m\, v^\top v',
\,\,
J=\mathrm{diag}(J_1,J_2,J_3).
\]

The gravitational potential is $V(R,r)=m g_c\, e_3^\top r$, where $g_c \in \R$ is the acceleration of gravity, and the corresponding force of gravity in body--frame is 
$\text{grad}_{b} V(R,r)=(0,R^{\top}(0,0,mg_c)).$ For notational purposes we will denote by $\grad \; V (r) = (0,0,mg_c)$ the component of the gravitational force  on $\R^{3}$.

The forced Euler--Poincar\'e equation on $\mathfrak{se}(3)$ is
\[
\dot\xi+\nabla^{\mathfrak g}_\xi\xi = -\text{grad}_{b} V(R,r),
\]

Consider the basis $\operatorname{span}\{X_i\}_{i=1}^{6}$ for $\mathfrak{se}(3)$ where $X_1=(\widehat e_1,0)$, $X_2=(\widehat e_2,0)$, $X_3=(\widehat e_3,0)$, $X_4=(0,e_1)$, $X_5=(0,e_2)$, and $X_6=(0,e_3)$.

Let the control input subspace be $\mathfrak f=\operatorname{span}\{X_1,X_2,X_6\}$, so that 
the controlled system is
\begin{equation}\label{eq:hol-quad-left}
\dot\xi + \nabla^{\mathfrak g}_\xi\xi = -\text{grad}_{b} V(R,r)
+u^1 X_1+u^2 X_2+u^3 X_6,\,\dot g = (L_g)_\ast(\xi).
\end{equation}

Using the expression of the Levi--Civita connection for left-invariant metrics on $\SE(3)$,
\[
\nabla^{\mathfrak g}_\xi\xi
=\big(-J^{-1}(\omega\times J\omega),\; -\, v\times \omega\big),
\]
we obtain the explicit reduced dynamics:
\[
J\dot\omega = \omega\times(J\omega) + u_1 e_1 + u_2 e_2,
\]
\[
\dot v = v\times\omega - R^{\top}\grad \; V(r) + u_3 e_3.
\]

We impose the configuration constraint $Re_3=e_3$ with $r=(x,y,z_d)$, which defines the Lie subgroup
\[
K=\Big\{
(R,r)\in \SE(3)\;\Big|\;
R e_3=e_3,\;\ r=(x,y,z_d)
\Big\}.
\]
Its Lie algebra is $\mathfrak{k}
=\operatorname{span}\{X_3,X_4,X_5\}
\subset\mathfrak{se}(3)$ and it is orthogonal to $\mathfrak f$ under the kinetic metric.
Let $\text{pr}_{\mathfrak k}:\mathfrak g\to\mathfrak k$ denote the orthogonal projection.

By Proposition \ref{orthogonal:input:thm2}, the constraint is enforced by the control law
\[
u^\star = -\,\text{pr}_{\mathfrak f} \left[ \Big(-J^{-1}(\omega\times J\omega),\; -v\times\omega + R^{T}\grad \; V(r)\Big) \right],
\]
which gives
\begin{equation*}
    \begin{split}
        u^\star = - & (-J^{-1}(\omega\times J\omega))^1 X_1 + (-J^{-1}(\omega\times J\omega))^2 X_2 + \\
        & \left((-v\times\omega + R^{\top}\grad \; V(r))^3\right)X_6,
    \end{split}
\end{equation*}
where the superscripts are just the coordinates of the involved vectors with respect to the canonical basis $\{e_i\}$ of $\R^{3}$.

Thus the enforcing feedback is
\[
u^{\star \; 1} = \big(-J^{-1}(\omega\times J\omega)\big)^1,
\qquad
u^{\star \; 2} = \big(-J^{-1}(\omega\times J\omega)\big)^2,
\]
\[
u^{\star \; 3} = \big(-v\times\omega + R^{\top}\grad \; V(r)\big)^3.
\]

If $J=\mathrm{diag}(J_1,J_2,J_3)$, then
\[
u^{\star \; 1} = -\frac{J_3-J_2}{J_1}\,\omega_2\omega_3,
\qquad
u^{\star \; 2} = -\frac{J_1-J_3}{J_2}\,\omega_1\omega_3,
\]
\[
u^{\star \; 3} = -(v_x\omega_2 - v_y\omega_1) - (R^{\top} \grad \; V(r))^3.
\]

\medskip

The feedback cancels the $\mathfrak f$--component of the drift 
 (including gravity), making $\mathfrak k$ invariant under the closed loop.  
Thus, if $(g(0),\xi(0))\in K\times\mathfrak k$, the quadrotor maintains vertical 
attitude and constant altitude, with the thrust input tracking the gravitational term 
$(R^{\top} \grad \; V(r))^3$.

In the context of this paper, the enforced virtual constraint defines an invariant low--dimensional manifold of the closed--loop dynamics. The motion evolving on this manifold constitutes a \emph{motion primitive}, understood as a geometrically characterized dynamical regime rather than a single trajectory or equilibrium point. Different choices of virtual constraints correspond to different motion primitives, such as rotations about fixed axes, planar motions, or periodic regimes, which can be selected and combined at the level of control design.

\section{Virtual Nonholonomic Constraints on Lie Groups}\label{sec5}
Before introducing the concept of virtual nonholonomic constraints on Lie groups, we shall develop some results on the dynamics of nonholonomic systems on Lie groups. 


\subsection{Nonholonomic mechanical systems}
Let $Q$ be the configuration space of a mechanical system, a differentiable manifold with $\dim(Q)=n$, and with local coordinates denoted by $(q^i)$ for $i=1,\ldots,n$. Most nonholonomic systems have linear constraints on velocities, and these are the ones we will consider in this subsection.

A nonholonomic constraint linear in the velocities can be written in Pfaffian form as
\begin{equation}\label{NHconstraint}
\varphi^a(q,\dot q) = \mu^a_i(q)\,\dot q^i = 0, \qquad a=1,\dots,m,\end{equation}where $\mu^a(q) = \mu^a_i(q)\,dq^i$ are independent one–forms on $Q$. 
The constraint distribution and its annihilator are then
\begin{align*}\mathcal{D}_q &= \{\dot q \in T_q Q \mid \mu^a(q)(\dot q)=0,\ \forall a\},\\ 
\mathcal{D}_q^0 &= \mathrm{span}\{\mu^a(q)\}_{a=1}^m \subset T^*_qQ.\end{align*}

Next consider mechanical systems with a Lagrangian of mechanical type, $L(v_q)= \tfrac12 \|v_q\|^2 - V(q)$ with $v_q\in T_qQ$, subject to the nonholonomic constraints \eqref{NHconstraint}.  

A \textit{nonholonomic mechanical system} on a smooth manifold $Q$ is given
by the pair $(\left< \cdot, \cdot\right>, \mathcal{D})$, where $\left< \cdot, \cdot\right>$ is
a Riemannian metric on $Q,$ representing the kinetic energy of the
system and $\mathcal{D}$ is a regular distribution on $Q$
describing the nonholonomic constraints. Denote by $\tau_{\mathcal{D}}:\mathcal{D}\rightarrow Q$ the canonical
projection from $\mathcal{D}$ to $Q$, locally given by $\tau_{\mathcal{D}}(q^i, \dot{q}^i)=q^i$, and denote by 
$\Gamma(\tau_{\mathcal{D}})$ the set of sections of $\tau_{D}$, that is, $Z\in\Gamma(\tau_{\mathcal{D}})$ if $Z:Q\to\mathcal{D}$ satisfies $(\tau_{\mathcal{D}}\circ Z)(q)=q$. We also denote by $\mathfrak{X}(Q)$ the set of vector fields on $Q$. If $X, Y\in\mathfrak{X}(Q),$ then
$[X,Y]$ denotes the standard Lie bracket of vector fields.

The trajectories $q:I\rightarrow Q$ of a mechanical Lagrangian determined by a kinetic Lagrangian function satisfy the following equation
\begin{equation}\label{ELeq}
    \nabla_{\dot{q}}\dot{q}= 0
\end{equation} which are just the geodesics with respect to the connection $\nabla$. 

Using the Riemannian metric we can define two
complementary orthogonal projectors ${\mathcal P}\colon TQ\to {\mathcal D}$ and ${\mathcal Q}\colon TQ\to {\mathcal
D}^{\perp},$ with respect to the tangent bundle orthogonal decomposition $\mathcal{D}\oplus\mathcal{D}^{\perp}=TQ$. In the presence of a constraint distribution $\mathcal{D}$, equation \eqref{ELeq} must be slightly modified as follows. Consider the \textit{nonholonomic connection} $\nabla^{nh}:\mathfrak{X}(Q)\times \mathfrak{X}(Q) \rightarrow \mathfrak{X}(Q)$ defined by (see \cite{bullo} for instance)
\begin{equation}\label{nh:connection}
    \nabla^{nh}_X Y =\nabla_{X} Y + (\nabla_{X} \mathcal{Q})(Y).
\end{equation} where $\nabla$  is the Levi-Civita connection on $Q$.

Thus, the constrained dynamics associated with the mechanical Lagrangian with potential 
satisfies
\begin{equation}\label{nonholonomic:mechanical:equation}
    \nabla^{nh}_{\dot q}\dot q + \mathcal{P}\big(\mathrm{grad}\,V\big) = 0,
\qquad \dot q(t)\in\mathcal{D}_{q(t)}.
\end{equation}

Equation~\eqref{nonholonomic:mechanical:equation} is understood as the system being subject to  the constraint 
$\dot q(t)\in \D_{q(t)}$ for all $t$, 
so that the admissible velocities always belong to the distribution $\D$. 
In this setting, the reaction forces act in the orthogonal complement $\D^\perp$, 
and the projection $\mathcal{P}:TQ\to \D$ ensures that only the admissible component 
of the acceleration is retained in the dynamics. 
Equivalently, the nonholonomic equations can be interpreted as the orthogonal 
projection of the unconstrained Euler--Lagrange equation~\eqref{ELeq} onto the 
constraint distribution $\D$.

 We have the following results relating geodesics with respect to both connections.

\begin{lemma}[\cite{bullo}, Section 2]\label{Lewis:lemma}
    Given a Riemannian manifold $Q$, letting $\nabla$ be the Levi-Civita connection and $\D$ a non-integrable distribution then a curve $q:[a,b]\to Q$ is a geodesic of the nonholonomic connection $\nabla^{nh}$, if and only if
    $$\nabla_{\dot{q}}\dot{q} \in \D^{\bot} \text{ and } \dot{q} \in \D.
    $$
\end{lemma}
We refer to those geodesics as \textit{nonholonomic trajectories}. 

When a potential $V(q)$ is present, the actual trajectories of the 
nonholonomic mechanical system satisfy the forced equation 
$\nabla^{nh}_{\dot q}\dot q + \mathcal{P}(\mathrm{grad}\,V)=0$ 
and therefore need not be nonholonomic geodesics in the sense of 
Lemma~\ref{Lewis:lemma}.

\subsection{Nonholonomic mechanics on Lie groups}

Consider a left--invariant distribution $\D$ on a Lie group $G$, that is, for each 
$g\in G$ the fiber $\D_g$ is defined by $\D_g = T_e L_g(\h)$, where $\h$ is a fixed 
subspace of the Lie algebra $\g$.  
Using the inner product $\langle\cdot,\cdot\rangle_{\g}$ on $\g$ define the 
orthogonal complement of $\h$ by $\h^\perp=\{\xi\in\g\;:\;\langle\xi,\eta\rangle_{\g}=0,\ \forall\eta\in\h\}$, so that $\g=\h\oplus \h^\perp$.  
We denote the associated orthogonal projectors by 
$\mathfrak{P}:\g\to\h$ and $\mathfrak{Q}:\g\to\h^\perp$ and we assume that the 
Riemannian metric on $G$ is left--invariant.  
Then both $\D$ and $\D^\perp$ are left--invariant subbundles.

\begin{lemma}
Let $\D$ be a left--invariant distribution on $G$.  
Given a left--invariant metric on $G$, let $\D^\perp$ be its associated 
orthogonal distribution and let $\mathcal{P}:TG\to\D$, $\mathcal{Q}:TG\to\D^\perp$ 
be the orthogonal projectors. Then:
\begin{enumerate}
    \item The orthogonal distribution $\D^\perp$ is left--invariant and 
    \[
        \D^\perp_g = T_e L_g(\h^\perp).
    \]
    \item The Lie algebra projectors satisfy
    \begin{equation}\label{Lie algebra projections-LI}
        \mathfrak{P}
        = T_g L_{g^{-1}} \circ \mathcal{P} \circ T_e L_g,
        \,\,
        \mathfrak{Q}
        = T_g L_{g^{-1}} \circ \mathcal{Q} \circ T_e L_g.
    \end{equation}
\end{enumerate}
\end{lemma}

\begin{proof}
1. Let $X,Y$ be vector fields such that $X\in\D$ and $Y\in\D^\perp$.  
Left--invariance of the metric implies
\[
\langle X_g , Y_g\rangle
=
\langle T_g L_{g^{-1}} X_g , T_g L_{g^{-1}} Y_g\rangle_{\g}.
\]
Since $X_g\in T_e L_g(\h)$, we have $T_g L_{g^{-1}} X_g\in\h$.  
Thus orthogonality at $g$ implies $T_gL_{g^{-1}}(Y_g)\in\h^\perp$, hence 
$Y_g = T_e L_g(\eta)$ for some $\eta\in\h^\perp$.  
Thus $\D^\perp$ is left--invariant and $\D^\perp_g=T_eL_g(\h^\perp)$.

2. Let $\xi\in\g$ and write $\xi = \xi^\top + \xi^\perp$ according to 
$\g=\h\oplus\h^\perp$.  
Left translation gives $T_e L_g(\xi) = \xi_L = \xi^\top_L + \xi^\perp_L$. 
Since $\D_g = T_e L_g(\h)$ and $\D^\perp_g=T_e L_g(\h^\perp)$, we have
$\mathcal{P}(T_e L_g(\xi)) = \xi^\top_L$ and 
$\mathcal{Q}(T_e L_g(\xi)) = \xi^\perp_L$.  
Applying $T_g L_{g^{-1}}$ gives 
$T_g L_{g^{-1}}(\xi^\top_L) = \xi^\top = \mathfrak{P}(\xi)$ and 
$T_g L_{g^{-1}}(\xi^\perp_L) = \xi^\perp = \mathfrak{Q}(\xi)$.
\end{proof}

Next we define the \emph{nonholonomic $\h$--connection} on the Lie algebra.
Let $\nabla^{nh}$ be the nonholonomic connection associated with the 
Levi--Civita connection $\nabla$ on $(G,\langle\cdot,\cdot\rangle)$.  
For $\xi,\eta\in\g$ let $\xi_L,\eta_L$ denote the corresponding left--invariant 
vector fields.  
We define:
\begin{equation}\label{h-connection-LI}
    \nabla^\h_{\xi}\eta 
    := \big(\nabla^{nh}_{\xi_L}\eta_L\big)(e).
\end{equation}

\begin{proposition}
Let $\mathfrak{P}:\g\to\h$ and $\mathfrak{Q}:\g\to\h^\perp$ be the 
orthogonal projectors associated with the left--invariant metric.  
Define the bilinear map
\[
(\nabla^{\g}_{\xi}\mathfrak{Q})(\eta)
:= \nabla^{\g}_{\xi}(\mathfrak{Q}(\eta))
   - \mathfrak{Q}(\nabla^{\g}_{\xi}\eta),
\]
where $\nabla^\g$ is the Levi--Civita connection induced on the Lie algebra.  
Then for all $\xi,\eta\in\h$ we have
\begin{equation}\label{eqq1-left}
    \nabla^{\h}_{\xi}\eta 
    = \mathfrak{P}\big(\nabla^{\g}_{\xi}\eta\big).
\end{equation}
\end{proposition}

\begin{proof}
From the definition~\eqref{h-connection-LI}, $\nabla^\h_{\xi}\eta 
= (\nabla^{nh}_{\xi_L}\eta_L)(e)
= \big(\nabla_{\xi_L}\eta_L + (\nabla_{\xi_L}\mathcal{Q})(\eta_L)\big)(e)$. Using the identities in Lemma~$3$ and evaluating at 
$e\in G$ gives the desired expression.
\end{proof}

Thus, for $\xi,\eta\in\h$ we obtain the explicit form
\begin{equation}\label{hcon_decomp}
    \nabla^{\h}_{\xi} \eta 
    =\frac12 \mathfrak{P}\big([\xi, \eta]_\g 
       -\sharp\left[\ad_\xi \flat(\eta)\right]
    -\sharp \left[\ad_\eta \flat(\xi)\right]\big).
\end{equation}

\begin{lemma}\label{lemma: Horcov-to-covh-left}
Let $g: [a,b] \to G$ be a curve and $X$ a smooth vector field along $g$ 
satisfying $X(t)\in \D_{g(t)}$. 
Suppose that $\xi(t) = g(t)^{-1}\dot{g}(t)$ and $\eta(t) = g(t)^{-1} X(t)$. Then the following relation holds for all $t \in [a,b]$:
\begin{equation}\label{eq:Horcov-to-covh-left}
    \nabla^{nh}_{\dot{g}} X(t) 
    = g(t)\big(\dot{\eta}(t) + \nabla_{\xi}^{\h} \eta(t) \big).
\end{equation}
\end{lemma}


\begin{theorem}
Suppose that $g: [a,b] \to G$ is a nonholonomic trajectory with respect to a 
left--invariant metric and distribution $\D$, and let 
$\xi(t) = g(t)^{-1}\dot{g}(t)$.  
Then $\xi$ satisfies the reduced equation
\begin{equation}\label{reduced-trajectory-left-V}
    \dot{\xi}(t) + \nabla^{\h}_{\xi(t)}\xi(t)
    + \mathfrak{P}\Big(
       \mathrm{grad}_b\,V(g(t))  \Big)
    = 0,
\end{equation}
or, equivalently,
\begin{equation}\label{reduced-trajectory-left-V-dual}
    \dot{\xi}(t) 
    + (\mathfrak{P} \circ \sharp)
      \big[ \operatorname{ad}^{*}_{\xi(t)}\flat(\xi(t))\big]
    + (\mathfrak{P}\circ\sharp)\big(\mu_V(g(t))\big)
    = 0,
    \, \xi(t)\in \h,
\end{equation}
where $\mu_V(g(t)) 
:= L_{g(t)}^{*}\big(dV(g(t))\big)\in\g^{*}$ is the left--trivialized covector associated with the potential $V$ determined by the pullback of $dV$ to the identity.
\end{theorem}

\begin{proof}
A nonholonomic trajectory with potential satisfies
\[
\nabla^{nh}_{\dot g}\dot g + \mathcal{P}\big(\mathrm{grad}\,V(g)\big)=0.
\]
Applying Lemma~\ref{lemma: Horcov-to-covh-left} to $X(t)=\dot g(t)$ and using 
$\eta(t)=\xi(t)$ yields
\[
g(t)\big(\dot\xi(t)+\nabla^\h_{\xi(t)}\xi(t)\big)
+ \mathcal{P}\big(\mathrm{grad}\,V(g(t))\big)=0.
\]
Applying $T_{g(t)}L_{g(t)^{-1}}$ and using 
$T_gL_{g^{-1}}\circ\mathcal{P}=\mathfrak{P}\circ T_gL_{g^{-1}}$ gives the reduced equation~\eqref{reduced-trajectory-left-V}.

For the dual form, use the explicit expression of the $\h$--connection and 
define the left--trivialized potential momentum 
$\mu_V(g)=L_{g(t)}^{*}(dV(g))$.  
Left--invariance of the metric implies $ T_gL_{g^{-1}}\big(\mathrm{grad}\,V(g)\big)=\sharp(\mu_V(g))$.
\end{proof}

\subsection{Virtual nonholonomic constraints on Lie groups}\label{sec5.1}

Similarly to Section~\ref{vclg}, we consider a controlled mechanical system on the Lie group $G$ 
with configuration $g\in G$ and body velocity $\xi\in\mathfrak{g}$, defined by the left trivialization $\xi = g^{-1}\dot g$. Let the control force $F:\mathfrak{g}\times U \rightarrow \mathfrak{g}^{*}$ be given by
\begin{equation}
    F(\xi,u) = \sum_{a=1}^{m} u^{a}F^{a}(\xi),
\end{equation}
where $F^{a}\in\mathfrak{g}^{*}$, $m<n$, $U\subset\mathbb{R}^{m}$ is the control set, 
and $u^{a}\in\mathbb{R}$, $1\leq a\leq m$, are the control inputs. 
Let $f_a = \sharp(F^a|_{\g})\in\g$ be the corresponding controlled directions and define the input subspace $\mathfrak{f} = \mathrm{span}\{f_{1},\dots,f_{m}\}\subset\mathfrak{g}$.

For the mechanical Lagrangian $L(g,\dot g)=\tfrac12\|\dot g\|^2 - V(g)$, 
the controlled Euler--Poincar\'e equations in left-trivialized form read
\begin{equation}\label{nh-control:system}
    \dot{g} = T_{e}L_{g}(\xi),\,
    \dot{\xi} 
    + \nabla^{\mathfrak{g}}_{\xi}\xi 
    + \grad_{b} V(g)
    = u^{a}f_{a}.
\end{equation}
\begin{definition}
   A \emph{virtual nonholonomic constraint} associated with the mechanical system 
   \eqref{nh-control:system} is a controlled invariant subspace $\mathfrak{d}\subset\mathfrak{g}$, 
   that is, there exists a control law $u^\star$ such that the closed--loop system 
   satisfies: if $\xi(0)\in\mathfrak{d}$ then $\xi(t)\in\mathfrak{d}$ for all $t\geq 0$.
\end{definition}

Provided that $\mathfrak{d}$ and $\mathfrak{f}$ are transversal there exists a unique control law $u^{\star}$ making $\mathfrak{d}$ control invariant due to Theorem \ref{maintheorem}.

\begin{remark}
A linear subspace $\mathfrak{d}\subset\g$ induces a left--invariant distribution 
$\D_g = T_e L_g(\mathfrak{d})$ on $G$.  
If $\mathfrak{d}$ is a Lie subalgebra, the distribution $\D$ is integrable and 
its integral manifolds are left cosets of the Lie subgroup $H=\exp(\mathfrak{d})$.
In that case, if the closed--loop dynamics keep $\xi(t)$ inside $\mathfrak{d}$, then 
$\dot g(t)=T_eL_{g(t)}(\xi(t))\in\D_{g(t)}$, and any trajectory $g(t)$ starting in $H$ remains in $H$ for all time.  
Thus, in the integrable case, a virtual nonholonomic constraint behaves effectively as a 
virtual holonomic constraint on the subgroup $H\subset G$.
\end{remark}

We now focus on the orthogonal situation, which parallels the holonomic case in 
Theorem~\ref{thm:holonomic-EP-left}. Let $\h\subset\g$ be a fixed subspace and consider the orthogonal decomposition $\g = \h \oplus \h^\perp$, with respect to the inner product induced by the left--invariant metric.  
Let $\mathfrak{P}:\g\to\h$ and $\mathfrak{Q}:\g\to\h^\perp$ denote the corresponding 
orthogonal projectors, and let $\nabla^\h$ be the nonholonomic $\h$--connection 
introduced in the previous subsection, so that for $\xi,\eta\in\h$,
\[
\nabla^\h_{\xi}\eta = \mathfrak{P}\big(\nabla^{\g}_{\xi}\eta\big).
\]

\begin{theorem}\label{nh-orthogonal:input:thm}
Assume that the control input subspace coincides with the orthogonal complement, $\mathfrak{f} = \h^\perp = \mathrm{span}\{f_1,\dots,f_m\}$, and that $\{f_a\}$ is a basis of $\h^\perp$.

Then, for any initial condition with $\xi(0)\in\h$, there exists a unique feedback 
control law $u^\star(\xi,g)$ that renders $\h$ invariant under the closed--loop 
dynamics of \eqref{nh-control:system}.  

Moreover, this feedback is given implicitly by
\begin{equation}\label{eq:nh-feedback-implicit}
    u^{\star}_a f_a 
    = \mathfrak{Q}\Big(
        \nabla^{\g}_{\xi}\xi 
        + \grad_{b} V(g)
      \Big)\in\h^\perp,
\end{equation}
and the reduced closed--loop dynamics on $\h$ are
\begin{equation}\label{eq:nh-reduced-constraint}
    \dot{\xi} 
    + \nabla^{\h}_{\xi}\xi
    + \mathfrak{P}\Big(
        \grad_{b} V(g)
      \Big) = 0,
    \qquad \xi(t)\in\h.
\end{equation}
\end{theorem}

\begin{proof}
Rewrite \eqref{nh-control:system} as
\[
\dot{\xi} 
= -\,\nabla^{\g}_{\xi}\xi 
  - \grad_{b} V(g)
  + u^{a}f_a.
\]
We seek a feedback such that, for $\xi\in\h$, the reduced dynamics on $\h$ satisfy
\[
\dot{\xi} + \nabla^\h_{\xi}\xi
+ \mathfrak{P}\Big(
    \grad_{b} V(g)
  \Big) = 0.
\]
Using $\nabla^\h_{\xi}\xi = \mathfrak{P}(\nabla^\g_{\xi}\xi)$, this is equivalent to
\[
\dot{\xi} 
+ \mathfrak{P}\Big(
    \nabla^\g_{\xi}\xi 
    + \grad_{b} V(g)
  \Big) = 0,
\qquad \xi\in\h.
\]
Substituting the controlled dynamics for $\dot\xi$ gives
\begin{align}
&-\nabla^{\g}_{\xi}\xi 
 - \grad_{b} V(g)
 + u^{a}f_a +\, \mathfrak{P}\Big(
    \nabla^\g_{\xi}\xi 
    + \grad_{b} V(g)
  \Big) = 0,
\end{align}
hence
\begin{align}
u^{a}f_a
&= \big(I-\mathfrak{P}\big)\Big(
    \nabla^{\g}_{\xi}\xi 
    + \grad_{b} V(g)
  \Big) \nonumber\\
&= \mathfrak{Q}\Big(
    \nabla^{\g}_{\xi}\xi 
    + \grad_{b} V(g)
  \Big).
\end{align}

The right--hand side lies in $\h^\perp$, and since $\{f_a\}$ is a basis of $\h^\perp$, 
this uniquely determines the coefficients $u^{\star}_a$, proving \eqref{eq:nh-feedback-implicit} 
and the existence and uniqueness of the feedback.

Substituting $u^\star$ back into the controlled dynamics yields
\[
\dot{\xi} 
+ \nabla^{\g}_{\xi}\xi 
+ \grad_{b} V(g)
= \mathfrak{Q}\Big(
    \nabla^{\g}_{\xi}\xi 
    + \grad_{b} V(g)
  \Big),
\]
that is,
\[
\dot{\xi} 
+ \mathfrak{P}\Big(
    \nabla^{\g}_{\xi}\xi 
    + \grad_{b} V(g)
  \Big) = 0.
\]
For $\xi\in\h$, this is exactly \eqref{eq:nh-reduced-constraint}.
Since the right--hand side belongs to $\h$, solutions starting in $\h$ remain 
in $\h$, so $\h$ is invariant under the closed--loop flow.
\end{proof}

\medskip

Theorem~\ref{nh-orthogonal:input:thm} shows that, in the orthogonal case 
$\g=\h\oplus\h^\perp$, the control inputs cancel the $\h^\perp$--component of the 
Euler--Poincar\'e drift $B(\xi,g)$ and project the dynamics onto $\h$, where they evolve according to the nonholonomic connection $\nabla^\h$ plus the projected potential term.


\begin{remark}
The transversality condition $\mathfrak g=\mathfrak d\oplus\mathfrak f$, and in particular the orthogonal case 
$\mathfrak f=\mathfrak d^\perp$ with respect to the kinetic inner product, plays a distinguished role in the setting of 
virtual nonholonomic constraints. In the orthogonal case, cancelling the $\mathfrak f$--component of the 
Euler--Poincar\'e drift corresponds to an orthogonal (d'Alembert--type) projection of the dynamics, yielding closed--loop 
equations on $\mathfrak d$ governed by the nonholonomic $\mathfrak d$--connection 
$\nabla^{\mathfrak d}=\mathfrak{P}\circ\nabla^{\mathfrak g}$.

This structure is closely related to classical Cartan--type decompositions of semi--simple Lie algebras, where 
$\mathfrak g=\mathfrak k\oplus\mathfrak p$ with $\mathfrak p=\mathfrak k^\perp$ under a bi--invariant inner product 
\cite{helgason1979differential}. In this analogy, $\mathfrak d$ plays the role of the constraint directions (analogous to 
$\mathfrak k$), while the input space $\mathfrak f$ corresponds to the complementary directions (analogous to 
$\mathfrak p$). The enforcing feedback cancels precisely the $\mathfrak f$--component of the Euler--Poincar\'e drift, 
projecting the dynamics onto $\mathfrak d$ and producing a constrained evolution reminiscent of symmetric--space 
reduction \cite{stratoglou2024nonholonomic}.

A classical example arises in $G=\mathrm{SO}(3)$ with Lie algebra $\mathfrak g=\mathfrak{so}(3)$. Fix a unit vector 
$e_3\in\mathbb{R}^3$ and define 
$\mathfrak d:=\mathrm{span}\{\widehat{e_3}\}$ and 
$\mathfrak f:=\mathrm{span}\{\widehat{e_1},\widehat{e_2}\}$. These subspaces satisfy the Cartan--type relations 
$[\mathfrak d,\mathfrak d]\subseteq\mathfrak d$, 
$[\mathfrak d,\mathfrak f]\subseteq\mathfrak f$, and 
$[\mathfrak f,\mathfrak f]\subseteq\mathfrak d$, and yield an orthogonal decomposition 
$\mathfrak g=\mathfrak d\oplus\mathfrak f$ under the Killing form \cite{bloch2003nonholonomic}. 

Within the virtual nonholonomic constraint framework, selecting $\mathfrak d$ as the constraint subspace and 
$\mathfrak f$ as the control input space admits a clear geometric interpretation: the control inputs act along directions 
transverse to the symmetry axis, while the feedback enforces invariance of the dynamics on $\mathfrak d$. Under the 
unique feedback law of Theorem~\ref{maintheorem}, the $\mathfrak f$--component of the Euler--Poincar\'e drift is cancelled 
and the closed--loop dynamics evolve on $\mathfrak d$, recovering the Cartan--type splitting 
$\mathfrak g=\mathfrak d\oplus\mathfrak f$ at the level of the reduced dynamics.
\hfill$\diamond$
\end{remark}

\begin{figure}[h!]
\centering

\tdplotsetmaincoords{70}{110}

\begin{tikzpicture}[scale=2.1,tdplot_main_coords]

  \shade[ball color=gray!15,opacity=0.95] (0,0,0) circle (1);

  \draw[->,thick] (0,0,0) -- (1.3,0,0) node[below] {$e_1$};
  \draw[->,thick] (0,0,0) -- (0,1.3,0) node[left] {$e_2$};
  \draw[->,very thick,blue] (0,0,0) -- (0,0,1.3) node[above] {$e_3$};


  \draw[thick,red] plot[domain=0:360,samples=120,variable=\t]
      ({cos(\t)},{sin(\t)},0);

  \draw[thick,green!60!black] plot[domain=-90:90,samples=120,variable=\t]
      ({cos(\t)},0,{sin(\t)});

  \draw[thick,green!60!black,dashed] plot[domain=-90:90,samples=120,variable=\t]
      (0,{cos(\t)},{sin(\t)});

  \draw[thick,orange!80!black]
    plot[domain=0:360,samples=120,variable=\t]
      ({0.8*cos(\t)},{0.8*sin(\t)},{sqrt(1-0.8*0.8)});

  \coordinate (Xi) at (0.55,0.35,0.75);
  \draw[->,thick,black] (0,0,0) -- (Xi) node[above right] {$\xi$};

  \coordinate (Xi_h) at (0,0,0.75);
  \draw[dashed,blue] (Xi) -- (Xi_h);
  \fill[blue] (Xi_h) circle (0.7pt) node[left] {$\mathfrak{P}(\xi)$};

  \coordinate (Xi_f) at (0.55,0.35,0);
  \draw[dashed,gray!60] (Xi) -- (Xi_f);
  \fill[gray!60] (Xi_f) circle (0.7pt) node[below right] {$\mathfrak{Q}(\xi)$};

\end{tikzpicture}

\caption{
Geometric visualization of the orthogonal decomposition 
$\mathfrak{so}(3) = \mathfrak d \oplus \mathfrak f$ on the unit sphere.
The axis $\mathfrak d = \mathrm{span}\{e_3\}$ (blue) represents the constraint 
subspace, while the plane $\mathfrak f = \mathrm{span}\{e_1,e_2\}$ corresponds to 
its orthogonal complement.
The equator (red) and meridians (green) illustrate geodesic curves associated 
with the induced metric.
A vector $\xi$ decomposes as $\xi = \mathfrak P(\xi) + \mathfrak Q(\xi)$ via the 
orthogonal projectors onto $\mathfrak d$ and $\mathfrak f$, reflecting the 
projection structure underlying virtual nonholonomic constraints.}

\label{fig:cartan-geodesic}
\end{figure}

\subsection{Virtual nonholonomic constraint for a quadrotor UAV}

We continue the quadrotor example on $SE(3)$
from the previous subsection. The uncontrolled
Euler--Poincar\'e equations take the form
\[
\dot\xi+\nabla^{\mathfrak g}_{\xi}\xi 
+ \text{grad}_{b} V(R,r)= 0.
\]
Recall that for $\xi=(\omega,v)$ we have the explicit expression
\[
\nabla^{\mathfrak g}_\xi\xi
=\big(-J^{-1}(\omega\times J\omega),\; -v\times\omega\big).
\]

We now add control inputs acting as \emph{effective} body--frame forces
along $e_2$ and $e_3$, which in practice can be realized by suitable
combinations of thrust and fast attitude regulation.  With respect to the
orthonormal basis $\{X_{1},X_{2},X_{3},X_{4},X_{5},X_{6}\}$ defined previously, we take now $\mathfrak f = \text{span} \{X_5,X_6\}$, so that the controlled Euler--Poincar\'e equations read
\begin{equation}\label{eq:se3-quad-controlled-nh}
\dot\xi+\nabla^{\mathfrak g}_{\xi}\xi
+ \text{grad}_{b} V(R,r)
= u^1X_5+u^2X_6.
\end{equation}
In components,
\[
\dot v = v\times\omega - R^{\top}\grad \; V(r) + u^1 e_2 + u^2 e_3,
\,\,
J\dot\omega = \omega\times (J\omega).
\]

Inspired by standard navigation primitives for quadrotors (altitude--hold and
no--sideslip motion), we impose the linear velocity constraints
\begin{equation}\label{eq:quad-nh-constraints-new}
v_y = 0,
\qquad
v_z = 0,
\end{equation}
which enforce zero vertical velocity and zero lateral velocity in the body
frame.  These constraints define the linear subspace
\[
\mathfrak d
=\{\xi=(\omega,v)\in\mathfrak{se}(3)\mid v=(v_x,0,0)\},
\]
so that $\omega=(\omega_1,\omega_2,\omega_3)$ is arbitrary and only the
forward body--frame velocity $v_x$ is allowed.  Equivalently, $\mathfrak d=\operatorname{span}\{X_1,X_2,X_3,X_4\}$. A short computation using the Lie bracket on $\mathfrak{se}(3)$ shows that
$[X_2,X_4]=X_6\notin\mathfrak d$, so $\mathfrak d$ is not a Lie subalgebra and the associated left--invariant
distribution is genuinely nonholonomic.

Geometrically, $\mathfrak d$ captures \emph{planar navigation} with
altitude hold and no sideslip: the vehicle is allowed to yaw and tilt, and
to move forward in its body $e_1$--direction, but it does not drift
sideways ($v_y=0$) or along the vertical in the body frame ($v_z=0$).
The complementary control subspace $\mathfrak f = \operatorname{span}\{X_5,X_6\}$ acts precisely along the forbidden directions (lateral and vertical body
forces).  Thus we have an orthogonal decomposition
$\mathfrak g = \mathfrak d \oplus \mathfrak f$, where $\mathfrak d$ encodes the desired ``navigation'' behaviour and
$\mathfrak f$ provides the directions we use to enforce the virtual
nonholonomic constraint.

Let $\mathfrak P:\mathfrak g\to\mathfrak d$ and
$\mathfrak Q:\mathfrak g\to\mathfrak f$ be the associated orthogonal
projectors.  The drift of~\eqref{eq:se3-quad-controlled-nh} is
\[
B(\xi,g)
= \big(-J^{-1}(\omega\times J\omega),\; -v\times\omega + R^{\top}\grad \; V(r)\big).
\]
According to the virtual constraint construction in
Section~\ref{sec5}, the feedback $u^\star$ enforcing the constraint
$\xi\in\mathfrak d$ is uniquely determined by cancelling the
$\mathfrak f$--component of the drift:
\[
u^{\star}_1X_5+u^{\star}_2X_6
= \mathfrak Q\big(B(\xi,g)\big),
\]
or, using the dual basis $\{F^1,F^2\}$ of $\{X_5,X_6\}$, $u^{\star}_a = F^a\!\left(\mathfrak Q\big(B(\xi,g)\big)\right)$ for $a=1,2$. Under this control law, if $\xi(0)\in\mathfrak d$, then $\xi(t)\in\mathfrak d,\ \forall t$, and the quadrotor evolves on the virtual nonholonomic constraint subspace $\mathfrak d$.

On $\mathfrak d$, the reduced dynamics is described by the nonholonomic
$\mathfrak d$--connection $\nabla^{\mathfrak d}$, which in this setting is
given by the projection of the full Riemannian $\mathfrak g$--connection,
\[
\nabla^{\mathfrak d}_{\xi}\xi
= \mathfrak P\big(\nabla^{\mathfrak g}_\xi\xi\big),
\qquad \xi\in\mathfrak d.
\]
Substituting the feedback $u^\star$ into
\eqref{eq:se3-quad-controlled-nh} and projecting onto $\mathfrak d$ yields
the reduced closed--loop equation
\begin{equation}\label{eq:reduced-h-quad}
\dot\xi + \nabla^{\mathfrak d}_{\xi}\xi
+ \mathfrak P\big( R^{\top}\grad \; V(r) \big) = 0,
\qquad \xi(t)\in\mathfrak d.
\end{equation}

In terms of the components $(\omega,v_x)$ on $\mathfrak d$, write
$\xi=(\omega,v_x e_1)$.
Using $\nabla^{\mathfrak g}_\xi\xi
=\big(-J^{-1}(\omega\times J\omega), -v\times\omega\big)$ and
$v=(v_x,0,0)$, we obtain
\[
v\times\omega = (0,-v_x\omega_3,\,v_x\omega_2),
\]
whose projection onto $\mathfrak d$ has no translational component along
$e_1$.  Therefore
\[
\nabla^{\mathfrak d}_\xi\xi
=\big(-J^{-1}(\omega\times J\omega),\,0\cdot e_1\big),
\]
and the reduced dynamics~\eqref{eq:reduced-h-quad} becomes
\[
J\dot\omega + \omega\times(J\omega)=0,
\qquad
\dot v_x + (R^{\top}\grad \; V(r))_1 = 0,
\]
with the constraints $v_y=v_z=0$ enforced by the feedback $u^\star$.
In the hovering attitude $R=I$, where $R^{\top}\grad \; V(r)=(0,0,-g)$, the forward
speed $v_x$ is constant and the rotational dynamics reduces to the free
Euler equations on $SO(3)$, while the control inputs $u^\star$ cancel
exactly the lateral and vertical components of gravity and drift so that
the quadrotor remains on the nonholonomic ``navigation'' subspace
$\mathfrak d$.

\subsection{Virtual affine nonholonomic constraints on Lie groups}

We extend the construction of virtual linear nonholonomic constraints to the
affine case, in full parallel with the holonomic and linear nonholonomic
settings.  Throughout this section, we will still consider the control system \eqref{nh-control:system}.

\begin{definition}
A \emph{virtual affine nonholonomic constraint} for
\eqref{nh-control:system} is an affine subspace
$\mathfrak a\subseteq\g$ that is rendered invariant by a feedback control law.
\end{definition}

Let the affine constraint be $\mathfrak a = a_0 + \mathfrak d$, with 
$\mathfrak d$ a vector subspace of $\g$.  As in the linear case, we assume that the control input distribution satisfies $\g = \mathfrak d \oplus \mathfrak f$, which is the transversality condition guaranteeing the unique decomposition of any vector into a constraint component and an input component.

\begin{definition}
Two affine subspaces $W_1,W_2\subseteq V$ are \emph{transversal}, written
$W_1\oplus W_2$, if their model subspaces satisfy
$V = W_{10} \oplus W_{20}$.
\end{definition}

The next theorem merges the existence–uniqueness result and the reduced closed–loop dynamics in a unified form.

\begin{theorem}
\label{thm:affine-unified}
Let $\mathfrak a = a_0 + \mathfrak d$ be an affine subspace of $\g$ and
assume that $\g=\mathfrak d\oplus\mathfrak f$, where 
$\mathfrak f = \mathrm{span}\{f_1,\dots,f_m\}$ is the control input space. Then:

\begin{enumerate}
\item There exists a unique feedback control $u^\star$ that renders
$\mathfrak a$ invariant under the closed--loop dynamics of 
\eqref{nh-control:system}.  

\item  If the decomposition is \emph{orthogonal},
$\mathfrak f = \mathfrak d^\perp$, then
\begin{equation}\label{eq:affine-feedback}
u^\star_a f_a
= 
\mathfrak Q\Big(
    \nabla^{\g}_{\xi}\xi
  + \text{grad}_{b} V(g)
\Big),
\, \mathfrak Q:\g\to\mathfrak f,
\end{equation}
and this law is unique.

\item For any initial condition satisfying $\xi(0)\in\mathfrak a$, the closed--loop
dynamics remains in $\mathfrak a$ for all time and satisfies
\begin{equation}\label{eq:affine-reduced}
\dot{\xi}
+ \nabla^{\mathfrak d}_{\xi-a_0}(\xi-a_0)
+ \mathfrak{P}\!\Big(
    \text{grad}_{b} V(g)
  \Big)
= 0,
\end{equation}
with $\xi(t)\in\mathfrak a$, where $\mathfrak P$ is the orthogonal projector onto $\mathfrak d$ and
$\nabla^{\mathfrak d} = \mathfrak P\circ\nabla^{\g}$ is the induced
nonholonomic $\mathfrak d$–connection.
\end{enumerate}
\end{theorem}

\begin{proof}
A curve remains in the affine set $\mathfrak a = a_0 + \mathfrak d$ iff
$\xi(t)-a_0\in\mathfrak d$, i.e. $\mu^a(\xi(t)-a_0)=0$, for covectors $\mu^a$ spanning the annihilator of $\mathfrak d$.  
Differentiating and substituting \eqref{nh-control:system} yields
\[
\mu^a(\dot\xi)
= -\,\mu^a(B(\xi,g))
  + u^b\mu^a(f_b).
\]
Transversality implies that $\mu^a(f_b)$ is invertible, hence the invariance
conditions uniquely determine the feedback, proving (1). In the orthogonal case $\mathfrak f=\mathfrak d^\perp$, the drift decomposes as
$B = \mathfrak P(B) + \mathfrak Q(B)$, yielding the explicit law
\eqref{eq:affine-feedback}, proving (2).

Substituting $u^\star$ into the closed--loop dynamics gives
\[
\dot\xi + \mathfrak{P}(B(\xi,g)) = 0,
\]
which for $\xi-a_0\in\mathfrak d$ becomes precisely 
\eqref{eq:affine-reduced}, proving (3).
\end{proof}

\begin{remark}
We adopt the left--trivialized velocity $\xi=g^{-1}\dot g$, natural for systems
whose constraints and actuation are expressed in body coordinates.
Right--invariant systems, such as the rigid body with an internal rotor,
replace $\xi$ with $\dot g g^{-1}$ and interchange 
$(L_{g^{-1}})_{\ast}$ with $(R_{g^{-1}})_{\ast}$, but all geometric results
remain unchanged.\hfill$\diamond$
\end{remark}


\subsection{Example: Rigid body with an internal rotor}

We consider a rigid body equipped with a single internal rotor, see \cite{bloch1992stabilization}. The configuration space is $G = SO(3) \times \mathbb{S}^{1}$, the Cartesian product of two Lie groups endowed with the product Lie group structure. The $SO(3)$ component describes the attitude of the rigid body, while the $\mathbb{S}^{1}$ component represents the angular position of the internal rotor. Actuation of the rotor redistributes angular momentum internally, without generating external torques.

Let $\mathfrak{g} \cong \mathbb{R}^{4}$ denote the Lie algebra of $G$, and consider body variables $\xi = (\omega, \dot{\alpha}) = (\omega_1,\omega_2,\omega_3,\dot{\alpha}) \in \mathfrak{g}$, where $\omega \in \mathbb{R}^{3}$ is the body angular velocity of the rigid body and $\dot{\alpha}$ is the rotor angular velocity. We endow $\mathfrak{g}$ with the inner product
\[
\langle \xi, \xi \rangle
=
\lambda_{1} \omega_1^2 + \lambda_{2} \omega_2^2 + \lambda_{3} \omega_3^2
+ 2 J \omega_{3} \dot{\alpha}
+ J \dot{\alpha}^{2},
\]
which corresponds to the kinetic energy metric of a rigid body with principal inertias $\lambda_i>0$ and a rotor of inertia $J$ aligned with the third body axis. For the metric to be positive definite we assume $\lambda_i>0$ for $i=1,2,3$ and $J\lambda_{3}-J^{2}>0$.

The adjoint action $\mathrm{ad}_{\xi}:\mathfrak{g}\to\mathfrak{g}$ is given by
\[
\mathrm{ad}_{\xi}\eta = (\omega \times \gamma,0),
\qquad
\eta=(\gamma,\dot{\beta})\in\mathfrak{g}.
\]
The corresponding Euler--Poincar\'e equations describing the free dynamics of the rigid body--rotor system are
\begin{equation*}
\begin{split}
\dot{\omega}_{1} & = -\frac{1}{\lambda_{1}}
\left( (\lambda_{3}-\lambda_{2})\omega_{2}\omega_{3} + J \omega_{2}\dot{\alpha} \right), \\
\dot{\omega}_{2} & = -\frac{1}{\lambda_{2}}
\left( (\lambda_{1}-\lambda_{3})\omega_{1}\omega_{3} - J \omega_{1}\dot{\alpha} \right), \\
\dot{\omega}_{3} & = -\frac{J}{D} (\lambda_{2}-\lambda_{1})\omega_{1}\omega_{2}, \\
\ddot{\alpha} & = \frac{J}{D} (\lambda_{2}-\lambda_{1})\omega_{1}\omega_{2},
\end{split}
\end{equation*}
where $D = J\lambda_{3}-J^{2}$. These equations describe the exchange of angular momentum between the rigid body and the internal rotor while preserving total angular momentum in the absence of control. {The last two equations give us the conservation law $\dot{\omega}_{3}+\ddot{\alpha}=0$.}

Rather than stabilizing the system to rest, we aim to impose a desired rotational regime characterized by a constant affine relation between the body angular velocity $\omega_{3}$ and the rotor speed $\dot{\alpha}$. This is achieved by enforcing the affine nonholonomic constraint
\begin{equation}\label{constraint:rotor}
    (J - k \lambda_{3}) \omega_{3} + J(1-k)\dot{\alpha} = p,
\end{equation}

where $k\in\mathbb{R}$ and $p\in\mathbb{R}$ are design parameters. The constant $p$ defines a prescribed momentum bias, while $k$ interpolates between body-dominated and rotor-dominated angular momentum regimes. 

When $p=0$, the affine constraint reduces to a linear virtual nonholonomic constraint, and the invariant set $\mathfrak{a}$ coincides with its model vector subspace $\mathfrak{h}$. In this case, the closed-loop dynamics are confined to a linear subspace of $\mathfrak{g}$ passing through the origin. For $p\neq 0$, the invariant set is an affine subspace, corresponding to a nonzero momentum bias. 


Let $\{e_1,e_2,e_3,e_4\}$ be the basis of $\mathfrak{g}$ defined by $e_{1}=(\widehat{(1,0,0)},0)$, $e_{2}=(\widehat{(0,1,0)},0)$, $e_{3}=(\widehat{(0,0,1)},0)$, $e_{4}=(\hat{0},1)$, where the hat map is the standard identification between $\mathbb{R}^3$ and $\mathfrak{so}(3)$. The constraint defines an affine subspace $\mathfrak{a} = \xi_0 + \mathfrak{h} \subset \mathfrak{g}$, whose model vector subspace is $\displaystyle{\mathfrak{h}
=
\mathrm{span}
\left\{
e_{1},\;
e_{2},\;
e_{4} - \frac{J(1-k)}{J-k\lambda_{3}}\,e_{3}
\right\}}$. Since $\mathfrak{h}$ is not a Lie subalgebra, the associated constraint is nonholonomic, and the affine offset $\xi_0$ encodes the desired momentum bias.

We choose a complementary input force subspace
\[
\mathfrak{f}
=
\mathrm{span}
\left\{
-\frac{J}{D}e_{3} + \frac{\lambda_{3}}{D}e_{4}
\right\},
\]
so that $\mathfrak{g} = \mathfrak{h} \oplus \mathfrak{f}$. Under actuation along $\mathfrak{f}$, the controlled mechanical system takes the form
\begin{equation*}
\begin{split}
\dot{\omega}_{1} & = -\frac{1}{\lambda_{1}}
\left( (\lambda_{3}-\lambda_{2})\omega_{2}\omega_{3} + J \omega_{2}\dot{\alpha} \right), \\
\dot{\omega}_{2} & = -\frac{1}{\lambda_{2}}
\left( (\lambda_{1}-\lambda_{3})\omega_{1}\omega_{3} - J \omega_{1}\dot{\alpha} \right), \\
\dot{\omega}_{3} & = -\frac{J}{D} (\lambda_{2}-\lambda_{1})\omega_{1}\omega_{2}
-\frac{J}{D}u, \\
\ddot{\alpha} & = \frac{J}{D} (\lambda_{2}-\lambda_{1})\omega_{1}\omega_{2}
+\frac{\lambda_{3}}{D}u .
\end{split}
\end{equation*}
Using the affine virtual nonholonomic constraint construction of Section~V-E, the unique feedback control law rendering the affine constraint invariant is given by
\[
u^{\star} = k(\lambda_{1}-\lambda_{2})\omega_{1}\omega_{2}.
\]
Under this feedback, the closed-loop dynamics evolve on the affine invariant subspace $\mathfrak{a}$, enforcing the desired rotational regime through internal momentum redistribution.

We now present numerical simulations illustrating the closed--loop behavior of the rigid body with an internal rotor under the proposed virtual affine nonholonomic constraint. 
The purpose of these simulations is to validate the geometric structure predicted by the theory, namely the exact invariance of an affine constraint manifold and the resulting regime--shaping effect on the dynamics.

All simulations were performed using the parameter values $(\lambda_1,\lambda_2,\lambda_3) = (2,1,3), \, J = 1$, with constraint parameters $k = 0.33$, $p = 0.5$. The initial conditions were chosen inside the affine constraint manifold defined by \eqref{constraint:rotor}. We set $(\omega_1,\omega_2,\omega_3) = (0.3,\,0.8,\,0.6)$ and $\dot{\alpha}$ is obtained from \eqref{constraint:rotor}.


The feedback control law was applied throughout the simulations. Figure~\ref{omegas} shows the time evolution of the rigid--body angular velocity components $(\omega_1,\omega_2,\omega_3)$. 
Rather than converging to rest, the trajectories evolve in a structured periodic regime with constant amplitude. 

\begin{figure}[htb!]
    \centering
    \includegraphics[width=0.8\linewidth]{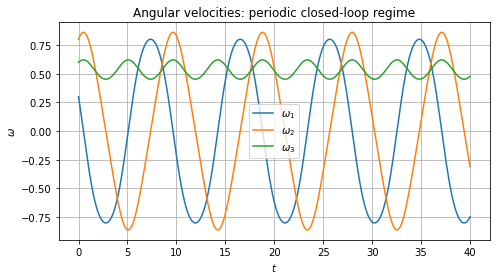}
    \caption{Time evolution of the rigid--body angular velocity components $(\omega_1,\omega_2,\omega_3)$ under the virtual affine nonholonomic constraint. The closed--loop system exhibits a bounded periodic regime rather than convergence to equilibrium.}
    \label{omegas}
\end{figure}

The exact invariance of the affine constraint manifold is illustrated in Fig.~\ref{constraints}, where the affine constraint function
\[
\Phi(t) = (J-k\lambda_3)\omega_3(t) + J(1-k)\dot{\alpha}(t) - p
\]
remains constant and equal to zero along the entire trajectory. 
\begin{figure}[htb!]
    \centering
    \includegraphics[width=0.8\linewidth]{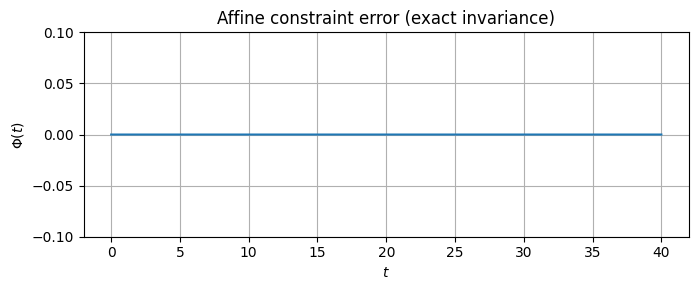}
    \caption{Evolution of the affine constraint function $\Phi(t)$. The constraint function vanishes, demonstrating exact invariance of the affine constraint manifold under the closed--loop dynamics.}
    \label{constraints}
\end{figure}

The geometric structure of the closed--loop dynamics is further highlighted in Fig.~\ref{fig:energy_sphere}, which depicts the trajectory of $(\omega_1,\omega_2,\omega_3)$ on the angular--velocity phase space.

Under the virtual affine constraint, the motion is confined to a closed curve corresponding to the intersection of this energy surface with the affine invariant manifold. The trajectory evolves along a closed curve, consistent with the periodic regime observed in Fig.~\ref{omegas}. 
This confirms that the imposed affine constraint shapes the long--term behavior of the system without introducing dissipation or convergence to equilibrium.


\begin{figure}[htb!]
    \centering
    \includegraphics[width=0.8\linewidth]{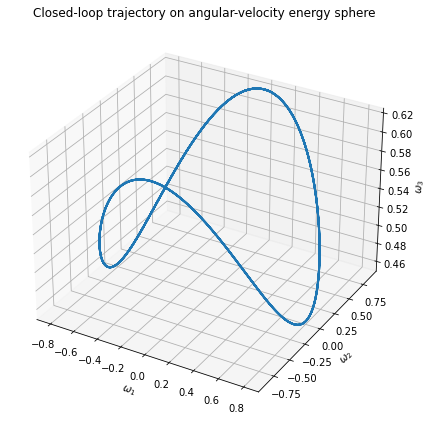}
    \caption{Closed--loop trajectory of the angular velocity $(\omega_1,\omega_2,\omega_3)$ in angular--velocity space. The motion evolves along a closed curve, consistent with the periodic regime induced by the virtual affine constraint.}
    \label{fig:energy_sphere}
\end{figure}

For completeness, Fig.~\ref{fig:projection} shows the projection of the closed--loop trajectory onto the $(\omega_1,\omega_2)$ plane. 
This planar representation provides an intuitive visualization of the periodic motion induced by the virtual affine constraint and highlights the bounded nature of the closed--loop response.

\begin{figure}[htb!]
    \centering
    \includegraphics[width=0.8\linewidth]{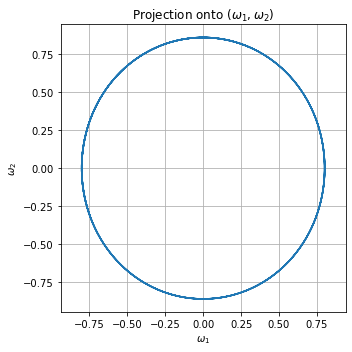}
    \caption{Projection of the closed--loop trajectory onto the $(\omega_1,\omega_2)$ plane, illustrating the periodic rotational regime enforced by the affine constraint.}
    \label{fig:projection}
\end{figure}

\section*{Conclusions}

This paper developed a unified geometric framework for virtual holonomic and virtual nonholonomic constraints on Lie groups, formulated intrinsically through feedback--invariant manifolds of the Lie algebra. Both linear and affine constraints were treated within a single setting. For each class of constraint, we established existence and uniqueness of enforcing feedback laws under the transversality condition $\mathfrak{g} = \mathfrak{h} \oplus \mathfrak{f}$, and showed that the resulting closed--loop trajectories evolve as geodesics of an induced constrained connection.

In the holonomic case, virtual constraints reproduce the reduced Euler--Lagrange equations on the constraint submanifold, and when the constraint distribution corresponds to a Lie subalgebra, the motion reduces to the dynamics on a Lie subgroup. In the nonholonomic case, we identified precise geometric conditions under which virtual nonholonomic constraints reproduce the reduced dynamics of classical nonholonomic systems. The affine extension demonstrates that the same geometric mechanisms apply to drift--affine constraints, such as those arising in rigid bodies with internal rotors, leading to invariant affine manifolds and nontrivial closed--loop regimes.

The proposed framework applies directly to robotic and mechanical systems evolving on Lie groups such as $SO(3)$ and $SE(3)$, including quadrotor UAVs and rigid bodies with internal actuation. The examples illustrate how virtual constraints can be systematically used to encode low-dimensional motion primitives and shape qualitative dynamical behavior, while preserving the intrinsic geometric structure of the configuration space.

\bibliographystyle{siamplain}
\bibliography{references}

\end{document}